\documentclass{amsart}  

\usepackage{hyperref}

\newcommand*{\mailto}[1]{\href{mailto:#1}{\nolinkurl{#1}}}

\newtheorem{theorem}{Theorem}[section]
\newtheorem{definition}[theorem]{Definition}
\newtheorem{lemma}[theorem]{Lemma}
\newtheorem{proposition}[theorem]{Proposition}
\newtheorem{corollary}[theorem]{Corollary}

\newtheorem{remark}[theorem]{Remark}

\newcommand{\R}{{\mathbb R}}
\newcommand{\N}{{\mathbb N}}

\newcommand{\C}{{\mathbb C}}

\newcommand{\Wr}{\mathsf{w}}
\newcommand{\E}{\mathrm{e}}
\newcommand{\I}{\mathrm{i}}
\newcommand{\supp}{\mathrm{supp}}

\newcommand{\loc}{\mathrm{loc}}
\newcommand{\cc}{\mathrm{c}}

\newcommand{\redot}{\cdot\,}

\newcommand{\OO}{\mathcal{O}}
\newcommand{\oo}{o}

\newcommand{\dip}{\upsilon}

\numberwithin{equation}{section}

\begin{document}

\title[Generalized indefinite strings of Stieltjes type]{Continued fraction expansions of Herglotz--Nevanlinna functions and generalized indefinite strings of Stieltjes type}

\author[J.\ Eckhardt]{Jonathan Eckhardt}
\address{Department of Mathematical Sciences\\ Loughborough University\\ Loughborough LE11 3TU \\ UK}
\email{\mailto{J.Eckhardt@lboro.ac.uk}}


\thanks{{\it Research supported by the Austrian Science Fund (FWF) under Grant No.\ P29299}}

\keywords{Continued fraction expansions, generalized indefinite strings, Stieltjes type}
\subjclass[2020]{Primary 30B70, 34A55; Secondary 34L05, 34B20}  

\begin{abstract}
 We employ some results about continued fraction expansions of Herglotz--Nevanlinna functions to characterize the spectral data of generalized indefinite strings of Stieltjes type.   
 In particular, this solves the corresponding inverse spectral problem through explicit formulas. 
\end{abstract}

\maketitle

\section*{Introduction}

Stieltjes continued fractions played a decisive role in the solution of the inverse spectral problem for Krein strings  \cite{dymc76, kakr74, ka94, kowa82, kr52}. 
 A certain modification of these continued fractions is of the same relevance for generalized indefinite strings, a class of spectral problems introduced in \cite{IndefiniteString}, based on previous work in \cite{krla79, krla80, la76, lawi98}.
 This kind of continued fractions arose in \cite{krla79, krla80} in connection with indefinite analogues of moment problems, was further studied in \cite{deko15, deko17} and applied to conservative multi-peakon solutions of the Camassa--Holm equation in \cite{ConservMP}. 
 It is the purpose of this article to discuss under which conditions a general Herglotz--Nevanlinna function admits a continued fraction expansion of this form.
 This will be done in the first section, which is close to classical material in  \cite{ak65} and \cite{krla79, krla80}, but does not seem to be available in the desired form.
 Subsequently, we will then use these findings in the second section to characterize the spectral data of generalized indefinite strings of Stieltjes type, whose coefficients are supported on discrete sets (near the left endpoint).  
This kind of generalized indefinite strings is closely related to Hamburger Hamiltonians \cite{ka02} for canonical systems and hence also connected to the classical moment problem \cite{IndMoment}.
 In particular, the results provide a solution of the inverse spectral problem for generalized indefinite strings of Stieltjes type with explicit formulas for the solution in terms of the moments of the spectral measure.
 A special case of such an inverse problem for indefinite strings was recently solved in \cite{flwi17} by means of a somewhat different approach.

\section{Continued fraction expansions of Herglotz--Nevanlinna functions}
 
 Let $m$ be a Herglotz--Nevanlinna function,  that is, the function $m$ is defined and analytic on $\C\backslash\R$, maps the upper complex half-plane into the closure of the upper complex half-plane and satisfies the symmetry relation
 \begin{align}
   m(z)^\ast = m(z^\ast), \quad z\in\C\backslash\R.
 \end{align}
 It is well known that such a function admits an integral representation  
 of the form
 \begin{align}\label{eqnmInt}
   m(z)= a + bz + \int_\R \frac{1}{\lambda-z} - \frac{\lambda}{1+\lambda^2}d\rho(\lambda),\quad z\in\C\backslash\R,
 \end{align}
 where $a$ is a real constant, $b$ is a non-negative constant and $\rho$ is a non-negative Borel measure on $\R$ which is subject to the growth restriction 
 \begin{align}
   \int_\R \frac{d\rho(\lambda)}{1+\lambda^2} < \infty. 
 \end{align}
 The constants $a$ and $b$, as well as the measure $\rho$ in this integral representation are uniquely determined by the function $m$ and may be recovered explicitly. 
 
 As long as they exist, we will denote with $s_0,s_1,\ldots$ the moments of the measure $\rho$, that is, we set 
 \begin{align}
   s_k = \int_\R \lambda^k d\rho(\lambda). 
 \end{align}
 It follows readily from expanding the integrand in the representation~\eqref{eqnmInt} that for each $K\in\N_0$, the function $m$ allows the asymptotic expansion 
 \begin{align}
    - m(z) = s_{-2}z + s_{-1} + \sum_{k=0}^{2K} \frac{s_k}{z^{k+1}} + \oo\biggl(\frac{1}{|z|^{2K+1}}\biggr)
 \end{align}
 as $|z|\rightarrow\infty$ along the imaginary axis, provided the moments of the measure $\rho$ exist up to order $2K$.
 Here, the real constants $s_{-2}$ and $s_{-1}$ are defined by 
 \begin{align}
   s_{-2} & = - b, & s_{-1} & =  \int_\R \frac{\lambda}{1+\lambda^2} d\rho(\lambda) - a. 
 \end{align}
 In fact, the converse of this statement holds true as well; see \cite[Theorem~3.2.1]{ak65}. 
 
 \begin{theorem}\label{thmmmomexp}
   For any fixed $K\in\N_0$, the moments of the measure $\rho$ exist up to order $2K$ if and only if the function $m$ allows the asymptotic expansion
   \begin{align}\label{eqnmexpinf}
       -m(z) = s_{-2}z  + s_{-1} + \sum_{k=0}^{2K} \frac{s_k}{z^{k+1}} + \oo\biggl(\frac{1}{|z|^{2K+1}}\biggr)
   \end{align}
   as $|z|\rightarrow\infty$ along the imaginary axis for some real constants $s_{-2},\ldots,s_{2K}$.
 \end{theorem}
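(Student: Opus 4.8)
The plan is to treat the forward implication as essentially the computation already indicated before the statement, and to concentrate on the converse, which carries the real content. Throughout I would restrict attention to the positive imaginary axis, writing $z=\I y$ with $y\to+\infty$; the symmetry relation $m(z^\ast)=m(z)^\ast$ together with the reality of $s_{-2},\dots,s_{2K}$ makes the behaviour on the negative imaginary axis automatic. The decisive point is that the imaginary part of $m$ is governed by a \emph{positive} integral,
\begin{align*}
  \mathrm{Im}\,m(\I y) = by + \int_\R \frac{y}{\lambda^2+y^2}\,d\rho(\lambda),
\end{align*}
so that separating real and imaginary parts in \eqref{eqnmexpinf} and collecting the even-index terms yields
\begin{align*}
  \int_\R \frac{y}{\lambda^2+y^2}\,d\rho(\lambda) = \sum_{j=0}^{K}\frac{(-1)^j s_{2j}}{y^{2j+1}} + \oo\biggl(\frac{1}{y^{2K+1}}\biggr).
\end{align*}

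I would then establish finiteness of the moments by induction on $K$. For the base case $K=0$ I would multiply the imaginary-part identity by $y$ to obtain $\int_\R y^2(\lambda^2+y^2)^{-1}\,d\rho\to s_0$; since the integrand increases monotonically to $1$, the monotone convergence theorem forces $\rho(\R)=s_0<\infty$. This conversion of an asymptotic statement into the finiteness of a mass is the heart of the matter. For the inductive step I would assume the claim for $K-1$, so that $\rho$ is finite and $s_0,\dots,s_{2K-2}$ are genuine finite moments which, by the forward direction and uniqueness of asymptotic expansions, coincide with the corresponding constants in \eqref{eqnmexpinf}. Because these lower even moments are now known to be finite, the exact identity
\begin{align*}
  \frac{y}{\lambda^2+y^2} = \sum_{j=0}^{K-1}\frac{(-1)^j\lambda^{2j}}{y^{2j+1}} + \frac{(-1)^K\lambda^{2K}}{y^{2K-1}(\lambda^2+y^2)}
\end{align*}
may be integrated term by term; subtracting the first $K$ known terms leaves
\begin{align*}
  \int_\R \frac{y^2\lambda^{2K}}{\lambda^2+y^2}\,d\rho(\lambda) = s_{2K} + \oo(1).
\end{align*}
A second application of monotone convergence (the integrand $\lambda^{2K}\,y^2(\lambda^2+y^2)^{-1}$ increases to $\lambda^{2K}$) then gives $\int_\R \lambda^{2K}\,d\rho = s_{2K}<\infty$, completing the induction. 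Finiteness of the odd moments up to order $2K$ follows immediately from $|\lambda|^{2j-1}\le 1+\lambda^{2K}$ together with the finiteness of $\rho$ and of $s_{2K}$.

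The step I expect to demand the most care is exactly the term-by-term integration in the inductive step: that identity can be integrated only once the lower moments are known to be finite, which is precisely what the induction hypothesis supplies, and this dependence is what makes a direct, non-inductive treatment awkward. By contrast, the forward direction is routine. Assuming $s_0,\dots,s_{2K}$ finite (so $\rho$ is in particular finite), one writes $-m(z)-s_{-2}z-s_{-1}=\int_\R (z-\lambda)^{-1}\,d\rho$, expands $(z-\lambda)^{-1}$ as a finite geometric sum with remainder, and checks that the remainder integral $\int_\R \lambda^{2K+1}(z-\lambda)^{-1}\,d\rho$ tends to $0$ along the imaginary axis by dominated convergence, with dominating function $|\lambda|^{2K}\in L^1(\rho)$.
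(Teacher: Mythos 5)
Your proof is correct, but it cannot be compared line-by-line with the paper's, because the paper does not actually prove this theorem: the forward direction is dispatched in the text preceding the statement by expanding the integrand of the integral representation~\eqref{eqnmInt}, and for the converse the paper simply cites the classical Hamburger--Nevanlinna result, \cite[Theorem~3.2.1]{ak65}. What you have written is essentially a self-contained reconstruction of that classical argument: pass to the imaginary part $\mathrm{Im}\,m(\I y) = by + \int_\R y(\lambda^2+y^2)^{-1}\,d\rho(\lambda)$, match the even-index coefficients, and climb up the even moments by induction, where each application of the monotone convergence theorem converts an asymptotic statement into finiteness of a moment, and the induction hypothesis is exactly what licenses the term-by-term integration of the algebraic identity for $y/(\lambda^2+y^2)$ (the remainder is integrable because $\lambda^{2K}/(\lambda^2+y^2)\le\lambda^{2K-2}$); odd moments are then dominated by the even ones, and the forward direction follows from the geometric-sum remainder together with the bound $|\lambda|^{2K+1}|\I y-\lambda|^{-1}\le|\lambda|^{2K}$ and dominated convergence. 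All of these steps check out. One point you leave implicit and should state: equating the terms of order $y$ requires $b=-s_{-2}$, i.e.\ $\mathrm{Im}\,m(\I y)/y\rightarrow b$, which follows from $\int_\R(\lambda^2+y^2)^{-1}d\rho(\lambda)\rightarrow0$ by dominated convergence; without this remark your displayed identity for $\int_\R y(\lambda^2+y^2)^{-1}d\rho(\lambda)$ would carry an extra term $(b+s_{-2})y$. This is a routine standard fact, not a genuine gap.
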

 
 We are going to demonstrate below that these two conditions are further equivalent to a particular continued fraction expansion of the function $m$. 
 To this end, we first introduce the Hankel determinants  
 \begin{align} 
  \Delta_{0,k} & = \begin{vmatrix} s_0 & s_1 & \cdots & s_{k-1} \\ s_1 & s_2 & \cdots & s_{k} \\ \vdots & \vdots & \ddots & \vdots \\ s_{k-1} & s_{k} & \cdots & s_{2k-2} \end{vmatrix}, 
  \end{align}
 as far as they are well-defined. 
  Since the entries are the moments of a non-negative measure, it is readily verified that these determinants are certainly non-negative.   
  Apart from this, we also introduce the Hankel determinants  
   \begin{align}  
  \Delta_{1,k} & = \begin{vmatrix} s_1 & s_2 & \cdots & s_{k} \\ s_2 & s_3 & \cdots & s_{k+1} \\ \vdots & \vdots & \ddots & \vdots \\ s_{k} & s_{k+1} & \cdots & s_{2k-1} \end{vmatrix}, & 
  \Delta_{2,k} & = \begin{vmatrix} s_2 & s_3 & \cdots & s_{k+1} \\ s_3 & s_4 & \cdots & s_{k+2} \\ \vdots & \vdots & \ddots & \vdots \\ s_{k+1} & s_{k+2} & \cdots & s_{2k} \end{vmatrix},
  \end{align}
   as well as 
      \begin{align}
  \Delta_{-1,k} & = \begin{vmatrix} s_{-1} & s_0 & \cdots & s_{k-2} \\ s_0 & s_1 & \cdots & s_{k-1} \\ \vdots & \vdots & \ddots & \vdots \\ s_{k-2} & s_{k-1} & \cdots & s_{2k-3} \end{vmatrix}, & 
    \Delta_{-2,k} & = \begin{vmatrix} s_{-2} & s_{-1} & \cdots & s_{k-3} \\ s_{-1} & s_0 & \cdots & s_{k-2} \\ \vdots & \vdots & \ddots & \vdots \\ s_{k-3} & s_{k-2} & \cdots & s_{2k-4} \end{vmatrix},
 \end{align}
 again, provided they exist. 
 In order to avoid ambiguity, it should be pointed out that all these determinants have to be interpreted as equal to one when $k$ is zero.  
 For future reference, we also state the useful relations 
  \begin{align} 
   \label{eqnDeltaRel}   \Delta_{1,k}  \Delta_{-1,k}  - \Delta_{1,k-1} \Delta_{-1,k+1} & = \Delta_{0,k}^2,  \\       
   \label{eqnDeltaRelp}  \Delta_{2,k}  \Delta_{0,k}  - \Delta_{2,k-1} \Delta_{0,k+1} & =  \Delta_{1,k}^2,   \\         
      \label{eqnDeltaRelm}         \Delta_{0,k}  \Delta_{-2,k}  - \Delta_{0,k-1} \Delta_{-2,k+1} & =  \Delta_{-1,k}^2,   
       \end{align} 
 which follow from Sylvester's determinant identity \cite{ba68, ga59} and hold as long as the respective determinants are well-defined.

Let us suppose for now that $m$ is a rational Herglotz--Nevanlinna function and denote with $D$ the number of poles of $m$. 
 Because the support of the measure $\rho$ coincides with the poles of $m$, it is evident that all moments of the measure $\rho$ exist in this case. 
 Moreover, it follows that the determinants $\Delta_{0,0},\ldots,\Delta_{0,D}$ are positive but $\Delta_{0,k}$ is zero when $k>D$.  
 Similarly, one sees that $\Delta_{1,k}$ is zero when $k>D$ and that $\Delta_{1,D}$ is zero if and only if zero is a pole of $m$. 
 We then define $N\in\N_0$ such that the number of non-zero elements of the sequence $\Delta_{1,0},\ldots,\Delta_{1,D}$ is exactly $N+1$ and introduce the function 
 \begin{align}\label{eqnkapparat}
   \kappa:\{1,\dots,N+1\}\rightarrow\{0,\ldots,D\}
 \end{align}
  such that $\kappa(n)$ is the smallest integer $k\in\N_0$ for which the sequence $\Delta_{1,0},\ldots,\Delta_{1,k}$ has precisely $n$ non-zero elements. 
  One observes that the increasing function $\kappa$ 
  is defined in such a way that $\Delta_{1,\kappa(1)},\ldots,\Delta_{1,\kappa(N+1)}$ enumerates all non-zero members of the sequence $\Delta_{1,0},\ldots,\Delta_{1,D}$.  
 As it follows from relation~\eqref{eqnDeltaRel} that there are no consecutive zeros in the sequence $\Delta_{1,0},\ldots,\Delta_{1,D}$, we may conclude that 
  \begin{align}
       \kappa(n+1) = \begin{cases} \kappa(n)+1, & \Delta_{1,\kappa(n)+1}\not=0, \\ \kappa(n)+2, & \Delta_{1,\kappa(n)+1}=0, \end{cases}
  \end{align} 
 for all $n\in\{1,\ldots,N\}$. 
 Since we have $\kappa(1)=0$, this determines $\kappa$ recursively. 
   
  \begin{proposition}\label{propmrat}
   If $m$ is a rational Herglotz--Nevanlinna function, then it admits the continued fraction expansion
       \begin{align}\label{eqnmratCF}
   m(z) = \dip_0 z + \omega_0 + \cfrac{1}{-l_{1}z + \cfrac{1}{\dip_1 z + \omega_1 + \cfrac{1}{ \;\ddots\;  + \cfrac{1}{-l_{N}z + \cfrac{1}{\dip_{N} z + \omega_{N} - \cfrac{r}{z}}}}}}\,, \quad z\in\C\backslash\R,
 \end{align}
  where the non-negative constants  $\dip_0,\ldots,\dip_N$ and the real constants $\omega_0,\ldots,\omega_{N}$ are given by    
  \begin{subequations}\label{eqnomegadipnDel}
  \begin{align}
     \dip_0 & = -\frac{\Delta_{-2,1}}{\Delta_{0,0}}, & \dip_n & =   \frac{\Delta_{-2,\kappa(n)+2}}{\Delta_{0,\kappa(n)+1}} -  \frac{\Delta_{-2,\kappa(n+1)+1}}{\Delta_{0,\kappa(n+1)}}, \label{eqndipnDel} \\
      \omega_0 & = -\frac{\Delta_{-1,1}}{\Delta_{1,0}}, & \omega_n & = \frac{\Delta_{-1,\kappa(n)+1}}{\Delta_{1,\kappa(n)}} - \frac{\Delta_{-1,\kappa(n+1)+1}}{\Delta_{1,\kappa(n+1)}},  \label{eqnomeganDel} 
     \end{align}
     \end{subequations}
    and the positive constants $l_1,\ldots,l_N$ and the non-negative constant $r$ are given by
     \begin{align}
      l_n &  =  \frac{\Delta_{1,\kappa(n)}^2}{\Delta_{0,\kappa(n)}\Delta_{0,\kappa(n)+1}},  & r & = \frac{\Delta_{0,\kappa(N+1)}\Delta_{0,\kappa(N+1)+1}}{\Delta_{1,\kappa(N+1)}^2}.  \label{eqnlnDel}
  \end{align}
\end{proposition}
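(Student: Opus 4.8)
The plan is to establish~\eqref{eqnmratCF} by a recursive extraction algorithm and to verify the explicit coefficient formulas by induction on $N$, using the integral representation~\eqref{eqnmInt} together with the determinant relations~\eqref{eqnDeltaRel}--\eqref{eqnDeltaRelm} to propagate the Hankel-determinant expressions through each reduction. First I would peel off the behavior at infinity. Since $m$ is a rational Herglotz--Nevanlinna function, the measure $\rho$ is a finite sum of point masses, every moment $s_k$ exists, and Theorem~\ref{thmmmomexp} applies for all $K$. Subtracting the leading affine part gives
\[
 m(z) - \dip_0 z - \omega_0 = \int_\R \frac{d\rho(\lambda)}{\lambda-z}
\]
with $\dip_0 = -s_{-2}$ and $\omega_0 = -s_{-1}$, which agree with~\eqref{eqndipnDel} and~\eqref{eqnomeganDel} once one notes $\Delta_{-2,1}=s_{-2}$, $\Delta_{-1,1}=s_{-1}$ and $\Delta_{0,0}=\Delta_{1,0}=1$; moreover $\dip_0=b\geq 0$. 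The Cauchy transform $f_1(z)=\int d\rho(\lambda)/(\lambda-z)$ on the right is again rational Herglotz--Nevanlinna, vanishes at infinity, and carries the expansion $f_1(z)=-\sum_{k\geq 0}s_k z^{-(k+1)}$.

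The heart of the argument is a single reduction step. Setting $l_1=1/s_0=\Delta_{1,0}^2/(\Delta_{0,0}\Delta_{0,1})$, I would pass to
\[
 \hat m(z) = \frac{1}{\,l_1 z + 1/f_1(z)\,},
\]
which is exactly the tail $\dip_1 z + \omega_1 + \cdots$ of~\eqref{eqnmratCF}. Writing $f_1=-1/p$ with $p$ Herglotz--Nevanlinna, one checks that $p$ has linear coefficient $l_1$, so that $p-l_1 z$ is still Herglotz--Nevanlinna and $\hat m = -1/(p-l_1 z)$ is Herglotz--Nevanlinna as well; it is rational, and since exactly one block has been removed its associated integer is $\hat N=N-1$. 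The decisive computation is to express the moments $\hat s_k$ of $\hat m$ through the $s_k$ and thereby the Hankel determinants $\hat\Delta_{\cdot,k}$ through the $\Delta_{\cdot,k}$ by an index shift; the relations~\eqref{eqnDeltaRel}--\eqref{eqnDeltaRelm}, which issue from Sylvester's identity~\cite{ba68,ga59}, are precisely what turns the determinant ratios in~\eqref{eqnomegadipnDel} and~\eqref{eqnlnDel} into these moment updates and produces the telescoping cancellations in~\eqref{eqndipnDel} and~\eqref{eqnomeganDel} (so that the single-term base formula for $\dip_0,\omega_0$ reappears as the two-term difference for $\dip_1,\omega_1$). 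Under this correspondence the coefficient formulas for $m$ restrict to those for $\hat m$ with indices advanced by $\kappa(2)\in\{1,2\}$, and the recursion for $\kappa$ preceding the statement becomes $\hat\kappa(n)=\kappa(n+1)-\kappa(2)$.

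With the reduction in hand, the induction closes once the sign and positivity assertions are verified. The constants $l_n>0$ and the residue $r\geq 0$ follow directly from the formulas~\eqref{eqnlnDel}, the positivity of $\Delta_{0,0},\ldots,\Delta_{0,D}$, and the defining property of $\kappa$ that $\Delta_{1,\kappa(n)}\neq 0$. The non-negativity of each $\dip_n$ is obtained structurally, since $\dip_n$ is the leading coefficient of the reduced Herglotz--Nevanlinna function reached after $n$ steps, and the reduction preserves the Herglotz--Nevanlinna property. The base case $N=0$ is the function $m(z)=\dip_0 z + \omega_0 - r/z$ with $r=\Delta_{0,1}=s_0$, corresponding to $\rho$ being a single point mass at the origin (or to $\rho=0$, where $r=0$).

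The step I expect to be the main obstacle is the bookkeeping tied to the function $\kappa$ and the degenerate reductions. When $\Delta_{1,\kappa(n)+1}=0$ the reciprocated function no longer tends to a constant but again decays like $z^{-1}$, so the next reduced function grows linearly and contributes a genuine block $\dip_n z + \omega_n$ with $\dip_n\neq 0$, accompanied by the index jump $\kappa(n+1)=\kappa(n)+2$; in the complementary case $\dip_n$ vanishes and $\kappa$ advances by one. Synchronizing the index shifts of the Hankel determinants with this case distinction, while confirming that the differences in~\eqref{eqndipnDel} and~\eqref{eqnomeganDel} collapse to the asserted values and that the Herglotz--Nevanlinna property survives each degenerate step, is where the argument is most delicate. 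The absence of two consecutive zeros among $\Delta_{1,0},\ldots,\Delta_{1,D}$, guaranteed by~\eqref{eqnDeltaRel}, is what keeps this case analysis finite and well defined.
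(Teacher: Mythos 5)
Your strategy is genuinely different from the paper's. The paper does not run a reduction algorithm at all: it perturbs $m$ to $m_t(z) = -s_{-2}z - s_{-1} + \int_\R (\lambda-z)^{-1}\E^{t\lambda}\,d\rho(\lambda)$, uses Sylvester's identity to show that the determinants $\Delta_{t,1,0},\ldots,\Delta_{t,1,\kappa(N+1)}$ are all non-zero for small $t\neq 0$, invokes the classical Stieltjes continued fraction expansion (with its known Hankel-determinant formulas) for $m_t$, and then lets $t\rightarrow 0$; in this limit pairs of consecutive Stieltjes blocks merge into the blocks $\dip_n z + \omega_n$ with $\dip_n>0$, and the relations~\eqref{eqnDeltaRel}--\eqref{eqnDeltaRelm} are used to evaluate the limits of the coefficient combinations within the single (fixed) moment sequence of $m_t$. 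Your plan --- peel off one block at a time and induct on $N$ --- is a legitimate alternative in principle; it is essentially how the nondegenerate Stieltjes case is classically proved.

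However, there is a genuine gap at exactly the point that carries the content of the proposition. The step you yourself call the ``decisive computation'' --- expressing the moments, and hence the Hankel determinants, of the reduced function $\hat m = \bigl(l_1 z + 1/f_1\bigr)^{-1}$ in terms of those of $m$, so that the inductive hypothesis applied to $\hat m$ returns precisely the index-shifted formulas~\eqref{eqnomegadipnDel} and~\eqref{eqnlnDel}, together with $\hat N = N-1$ and $\hat\kappa(n)=\kappa(n+1)-\kappa(2)$ --- is asserted, not carried out. The relations~\eqref{eqnDeltaRel}--\eqref{eqnDeltaRelm} cannot supply it: they are identities among the Hankel determinants of a \emph{single} moment sequence, whereas your induction requires a transformation law connecting the determinants of two \emph{different} moment sequences (that of $\rho$ and that of the measure of $\hat m$), i.e.\ a statement about how Hankel determinants behave under $f\mapsto -1/f$ and under subtraction of linear and constant parts, including in the degenerate situation $\Delta_{1,\kappa(n)+1}=0$ where the reduced function acquires a linear term and the index jumps by two. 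Establishing that law is the actual work here; note the paper's own remark following the proposition that mere existence of an expansion of the form~\eqref{eqnmratCF} is much simpler to prove, ``whereas working out explicit formulas for the constants takes more effort.'' Your base case, your first step ($\dip_0=-s_{-2}$, $\omega_0=-s_{-1}$, $l_1=1/s_0$, preservation of the Herglotz--Nevanlinna property), and your qualitative description of the degenerate case are all correct, but until the determinant transformation law is proved the induction does not close, and with it the proposal proves existence of some expansion rather than the stated formulas.
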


\begin{proof}
  Suppose that $m$ is a rational Herglotz--Nevanlinna function. 
  For each $t\in\R$, we define the rational Herglotz--Nevanlinna function $m_t$ by
  \begin{align*}
    m_t(z) = -s_{-2} z - s_{-1} + \int_{\R} \frac{1}{\lambda-z} \E^{t\lambda} d\rho(\lambda), \quad z\in\C\backslash\R.
  \end{align*}
 The moments and Hankel determinants corresponding to the function $m_t$ will be denoted in a natural way with an additional subscript $t$. 
 In particular, notice that $m_0$ coincides with our initial function $m$ and hence so do the associated quantities (which is why we will omit the additional subscripts in this case). 
 As each of the Hankel determinants depends analytically on $t$, we may conclude that the determinants $\Delta_{t,1,0},\ldots,\Delta_{t,1,\kappa(N+1)}$ are all non-zero as long as $t\not=0$ is small enough. 
 Indeed, even if $\Delta_{1,k}$ does vanish for some $k\in\lbrace1,\ldots,\kappa(N+1)-1\rbrace$ (notice here that $\Delta_{1,\kappa(N+1)}$ is certainly non-zero), then this holds because the derivative 
 \begin{align*}
   \left[\frac{d}{dt} \Delta_{t,1,k}\right]_{t=0} =  \begin{vmatrix} {s}_1 & s_2 & \cdots & s_{k-1} & s_{k+1} \\ s_2 & s_3  & \cdots & s_k & s_{k+2} \\ \vdots & \vdots & \ddots & \vdots & \vdots \\ s_{k} & s_{k+1}  & \cdots & s_{2k-2} & s_{2k}  \end{vmatrix}  =: \Delta_{1,k}'
 \end{align*}
 is non-zero (here it suffices to observe that we have 
 \begin{align*}
  \frac{d}{dt} s_{t,k} & = s_{t,k+1}, \quad  k\in\N_0,
 \end{align*}
 in order to verify the expression for the  derivative). 
 More precisely, non-vanishing of the above determinant follows from the relation  
 \begin{align*}
  \Delta_{1,k}' \Delta_{0,k} = \Delta_{0,k+1}\Delta_{1,k-1},
 \end{align*}
 which is a consequence of Sylvester's determinant identity \cite{ba68, ga59} (when $\Delta_{1,k}=0$). 
 
 It follows that, as long as $t\not=0$ is small enough, the function $m_t$ admits a Stieltjes continued fraction expansion \cite{st94, st95} (see \cite[Theorem~1.39]{hoty12}) of the form 
        \begin{align*}
   m_t(z) =-s_{-2} z - s_{-1} + \cfrac{1}{-l_{t,1}z + \cfrac{1}{\omega_{t,1} + \cfrac{1}{ \;\ddots\;  + \cfrac{1}{-l_{t,K}z + \cfrac{1}{\omega_{t,K} - \cfrac{r_t}{z}}}}}}\,, \quad z\in\C\backslash\R,
 \end{align*}
  where $K = \kappa(N+1)$, the non-zero real constants $\omega_{t,1},\ldots,\omega_{t,K}$  are given by
      \begin{align}
\label{eqnCobnInv}  \omega_{t,k} & = \frac{\Delta_{t,0,k}^2}{\Delta_{t,1,k-1} \Delta_{t,1,k}} = \frac{\Delta_{t,-1,k}}{\Delta_{t,1,k-1}} - \frac{\Delta_{t,-1,k+1}}{\Delta_{t,1,k}}
 \end{align}
and the positive constants $l_{t,1},\ldots,l_{t,K}$  as well as the non-negative constant $r_t$ are given by
    \begin{align}\label{eqnCoanInv}
  l_{t,k} & = \frac{\Delta_{t,1,k-1}^2}{\Delta_{t,0,k-1} \Delta_{t,0,k}},  & r_t & = \frac{\Delta_{t,0,K} \Delta_{t,0,K+1}}{\Delta_{t,1,K}^2}.
 \end{align}
 We note that the continued fraction has to be interpreted as  
 \begin{align*}
   m_t(z) = -s_{-2}z - s_{-1} - \frac{r_t}{z}, \quad z\in\C\backslash\R, 
 \end{align*}
 when $K$ is zero and that the alternative expression in~\eqref{eqnCobnInv} is obtained by using relation~\eqref{eqnDeltaRel}. 
 Put differently, the continued fraction expansion means that, upon defining the rational Herglotz--Nevanlinna functions $q_{t,0},\ldots,q_{t,K}$ recursively via  
 \begin{align}\label{eqnMrec}
  \frac{1}{q_{t,k}(z)} = -l_{t,k+1} z+\frac{1}{\omega_{t,k+1}+q_{t,k+1}(z)}, \quad z\in\C\backslash\R,
 \end{align}
 for every $k\in\{0,\ldots,K-1\}$, where $q_{t,K}$ is given by 
 \begin{align*}
  q_{t,K}(z) = -\frac{r_t}{z}, \quad z\in\C\backslash\R, 
 \end{align*}
 we eventually end up with  
 \begin{align}\label{eqnmtqf0}
   m_t(z) = -s_{-2} z -s_{-1} + q_{t,0}(z), \quad z\in\C\backslash\R. 
 \end{align}
 
 As $t\rightarrow0$, the functions $q_{t,\kappa(N+1)}=q_{t,K}$ clearly converge pointwise to the rational Herglotz--Nevanlinna function $q_{\kappa(N+1)}$ defined by  
 \begin{align}\label{eqnqkappaNp}
  q_{\kappa(N+1)}(z) = -\frac{r}{z}, \quad z\in\C\backslash\R,
 \end{align} 
 where the constant $r$ is given by~\eqref{eqnlnDel}. 
 Now let $n\in\lbrace 1,\ldots,N\rbrace$ and assume that the functions $q_{t,\kappa(n+1)}$ converge pointwise to some rational Herglotz--Nevanlinna function $q_{\kappa(n+1)}$. 
 If $\Delta_{1,\kappa(n)+1}$ does not vanish, then $\kappa(n+1)=\kappa(n)+1$ and we infer from~\eqref{eqnMrec} as well as the formulas~\eqref{eqnCobnInv} and~\eqref{eqnCoanInv} that the functions $q_{t,\kappa(n)}$ converge pointwise to the Herglotz--Nevanlinna function $q_{\kappa(n)}$ defined via 
 \begin{align}\label{eqnqkappan}
  \frac{1}{q_{\kappa(n)}(z)} & = -l_{n} z  + \cfrac{1}{\dip_{n} z + \omega_{n} + q_{\kappa(n+1)}(z)}\,, \quad z\in\C\backslash\R,
 \end{align}
 where  the constants $\dip_n$, $\omega_n$ and $l_n$ are given by~\eqref{eqnomegadipnDel} and~\eqref{eqnlnDel}; note that $\dip_n$ is zero here. 
 Otherwise, when $\Delta_{1,\kappa(n)+1}$ vanishes, one has $\kappa(n+1) = \kappa(n)+2$ and using the second expression in~\eqref{eqnCobnInv} we see that     
 \begin{align*}
  \omega_{t,\kappa(n)+1}+\omega_{t,\kappa(n)+2} & \rightarrow  \frac{\Delta_{-1,\kappa(n)+1}}{ \Delta_{1,\kappa(n)}} - \frac{\Delta_{-1,\kappa(n+1)+1}}{\Delta_{1,\kappa(n+1)}}  = \omega_{n}, 
  \end{align*}
  where the constant $\omega_n$ is given by~\eqref{eqnomeganDel}.
  Moreover, taking~\eqref{eqnCoanInv} into account, we also get 
  \begin{align*}
  l_{t,\kappa(n)+2}\omega_{t,\kappa(n)+1} & \rightarrow 0, \\
  l_{t,\kappa(n)+2}\omega_{t,\kappa(n)+2} & \rightarrow 0, \\
  l_{t,\kappa(n)+2}\omega_{t,\kappa(n)+1} \omega_{t,\kappa(n)+2} & \rightarrow \frac{\Delta_{0,\kappa(n)+1}\Delta_{0,\kappa(n)+2}}{\Delta_{1,\kappa(n)}\Delta_{1,\kappa(n)+2}}, 
 \end{align*}
 in this case. 
 Utilizing relation~\eqref{eqnDeltaRel} with $k=\kappa(n)+1$ as well as $k=\kappa(n)+2$ first and subsequently relation~\eqref{eqnDeltaRelm}, we infer that the last limit is actually equal to 
 \begin{align*}
   -\frac{\Delta_{-1,\kappa(n)+2}^2}{\Delta_{0,\kappa(n)+1}\Delta_{0,\kappa(n)+2}} =  \frac{\Delta_{-2,\kappa(n+1)+1}}{\Delta_{0,\kappa(n+1)}} - \frac{\Delta_{-2,\kappa(n)+2}}{\Delta_{0,\kappa(n)+1}} = - \dip_n, 
 \end{align*}
 where the constant $\dip_n$ is given by~\eqref{eqndipnDel}. 
 In particular, this shows that $\dip_n$ is positive as relation~\eqref{eqnDeltaRel} with $k=\kappa(n)+1$ shows that the numerator on the left-hand side is not zero. 
  Now observe that in the current case, the functions $q_{t,\kappa(n)}$ satisfy  
 \begin{align*}
   \frac{1}{q_{t,\kappa(n)}(z)}=-l_{t,\kappa(n)+1} z +\cfrac{1}{\omega_{t,\kappa(n)+1}+\cfrac{1}{-l_{t,\kappa(n)+2}z+\cfrac{1}{\omega_{t,\kappa(n)+2}+q_{t,\kappa(n+1)}(z)}}}
 \end{align*} 
 for every $z\in\C\backslash\R$. 
 With the help of the limits above, a computation then shows that the functions $q_{t,\kappa(n)}$ converge pointwise to the rational Herglotz--Nevanlinna function $q_{\kappa(n)}$ defined by~\eqref{eqnqkappan}, 
 where the constant $l_n$ is given by~\eqref{eqnlnDel}. 

 Concluding, we notice that the functions $m_t$ converge pointwise to our initial function $m$ by definition and thus we see from~\eqref{eqnmtqf0} that    
 \begin{align*}
   m(z) = \dip_0 z + \omega_0 + q_{\kappa(1)}(z), \quad z\in\C\backslash\R, 
 \end{align*}
  where the constants $\dip_0$ and $\omega_0$ are given by~\eqref{eqnomegadipnDel}. 
 In view of~\eqref{eqnqkappan} and~\eqref{eqnqkappaNp}, this shows that the function $m$ admits the claimed continued fraction expansion.
\end{proof}

It is not difficult to see that any continued fraction of the above form is a rational Herglotz--Nevanlinna function in turn. 
 We also note that the mere fact that every rational Herglotz--Nevanlinna function can be expanded in such a way is much simpler to prove (see \cite[Lemma~B]{UniSolCP}), whereas working out explicit formulas for the constants takes more effort. 

\begin{remark}\label{remCFcoefRexp}
The constants in the continued fraction in Proposition~\ref{propmrat} can also be expressed in different ways. 
 In view of relation~\eqref{eqnDeltaRelp}, the positive constants $l_1,\ldots,l_N$ may be written in the form
 \begin{align}\label{eqnlnDelalt}
  l_1 & = \frac{\Delta_{2,0}}{\Delta_{0,1}}, & l_n & =  \frac{\Delta_{2,\kappa(n+1)-1}}{\Delta_{0,\kappa(n+1)}} - \frac{\Delta_{2,\kappa(n)-1}}{\Delta_{0,\kappa(n)}}, \quad n>1.
 \end{align}
Apart from this, relation~\eqref{eqnDeltaRel} shows that we have 
\begin{align}
  \omega_n = & \frac{\Delta_{0,\kappa(n+1)}^2}{\Delta_{1,\kappa(n)}\Delta_{1,\kappa(n+1)}} \not= 0,  & \dip_n & = 0, 
\end{align}
 for $n\in\{1,\ldots,N\}$ as long as $\Delta_{1,\kappa(n)+1}$ is not zero. 
 On the other side, if $\Delta_{1,\kappa(n)+1}$ is zero, then relation~\eqref{eqnDeltaRelm} allows us to write 
 \begin{align}
   \dip_n = \frac{\Delta_{-1,\kappa(n)+2}^2}{\Delta_{0,\kappa(n)+1}\Delta_{0,\kappa(n)+2}}>0.
 \end{align}
 In particular, these expressions make it clear that the constant $\dip_n$ is not zero if and only if $\Delta_{1,\kappa(n)+1}$ vanishes and also that $\dip_n+|\omega_n|>0$ for all $n\in\{1,\ldots,N\}$.
\end{remark}

 Before we proceed to non-rational Herglotz--Nevanlinna functions, let us first provide two auxiliary results.
 In order to state them, let $m_1$ and $m_2$ be Herglotz--Nevanlinna functions and denote with $\rho_1$ and $\rho_2$ the corresponding measures in the respective integral representations.
 
   \begin{lemma}\label{lemMomRel}
   Let $K\in\N_0$ and suppose that  
   \begin{align}
     m_1(z) =  \frac{1}{- m_2(z)}, \quad z\in\C\backslash\R. 
   \end{align}

  1.\  Assume that $m_2(z)/z\rightarrow\beta$ for some positive $\beta$ as $|z|\rightarrow\infty$ along the imaginary axis. 
  Then the moments of the measure $\rho_2$ exist up to order $2K$ if and only if the moments of the measure $\rho_1$ exist up to order $2K+2$. 

 2.\ Assume that $m_1(z)/z\rightarrow 0$ and $m_2(z)/z\rightarrow0$ as $|z|\rightarrow\infty$ along the imaginary axis. 
  Then the moments of the measure $\rho_2$ exist up to order $2K$ if and only if the moments of the measure $\rho_1$ exist up to order $2K$.
 \end{lemma}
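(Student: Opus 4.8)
The plan is to reduce both statements to the asymptotic characterization of moment existence in Theorem~\ref{thmmmomexp} and then to transport the expansions across the algebraic identity $m_1(z)=-1/m_2(z)$. The only analytic ingredient is Theorem~\ref{thmmmomexp}; everything else is bookkeeping of orders in powers of $1/z$ along the imaginary axis.

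The elementary device I would isolate first is that if a function $g$ satisfies $g(z)=\gamma z^{p}\bigl(1+\sum_{j=1}^{M}c_j z^{-j}+o(z^{-M})\bigr)$ as $|z|\to\infty$ along the imaginary axis, with $\gamma\neq0$ and $p\in\Z$, then $1/g(z)=\gamma^{-1}z^{-p}\bigl(1+\sum_{j=1}^{M}c_j' z^{-j}+o(z^{-M})\bigr)$ to the same relative depth $M$. This follows by expanding $(1+\cdots)^{-1}$ as a geometric series; the point that needs a little care is that the remainder is not degraded, which holds because the quantity being inverted is $1+O(1/z)$, so all contributions beyond order $z^{-M}$ are controlled.

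For part 1, I would first read off the leading coefficients of $m_1$ from the hypothesis $m_2(z)/z\to\beta$: as $|m_2|\to\infty$ the function $m_1=-1/m_2$ tends to $0$, so the coefficients of $z$ and of $1$ in the expansion of $-m_1$ vanish, while $-m_1(z)\,z=z/m_2(z)\to1/\beta\neq0$ fixes the coefficient of $1/z$. Assuming moments of $\rho_2$ up to order $2K$, Theorem~\ref{thmmmomexp} expands $-m_2$ down to the term in $z^{-(2K+1)}$; factoring out $\beta z$ and applying the reciprocal fact with relative depth $M=2K+2$ produces an expansion of $-m_1=1/m_2$ with leading term $1/(\beta z)$ and lower terms down to $z^{-(2K+3)}$, which is exactly the expansion of Theorem~\ref{thmmmomexp} for $m_1$ with $K$ replaced by $K+1$; hence $\rho_1$ has moments up to order $2K+2$. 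The converse is symmetric: from moments of $\rho_1$ up to $2K+2$ and the nonvanishing coefficient of $1/z$ just noted, factor out $(1/\beta)z^{-1}$ and invert to expand $m_2=-1/m_1$ down to $z^{-(2K+1)}$.

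Part 2 follows the same template with leading power $p=0$; since inversion then preserves the relative depth and does not change the power, the moment order $2K$ is the same on both sides. The one extra point, which I expect to be the main obstacle, is that before inverting I must know that the leading constant of $m_2$ is nonzero. Assuming moments of $\rho_2$ up to order $2K\ge0$, Theorem~\ref{thmmmomexp} gives $m_2(z)\to-s_{-1}$ (writing $s_k$ for the moments of $\rho_2$), and I would exclude $s_{-1}=0$ using both hypotheses at once: if $m_2\to0$ then either $m_2\sim-s_0/z$ with $s_0\neq0$, which forces $m_1/z\to1/s_0\neq0$ contrary to assumption, or $s_0=0$, which forces $\rho_2=0$ and hence $m_2\equiv0$, incompatible with $m_1=-1/m_2$ being defined. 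The hypotheses alone do not suffice here: for instance $m_1(z)=\sqrt{z}$ and $m_2(z)=-1/\sqrt{z}$ satisfy both quotient conditions yet possess no moments at all, so it is really the existence of moments that pins the constant down. Once the constant is known to be nonzero, the reciprocal fact transfers moments up to order $2K$ in either direction, the two implications being interchanged by the symmetry of both the relation and the hypotheses under swapping $m_1$ and $m_2$.
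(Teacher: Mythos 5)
Your proposal is correct and follows essentially the same route as the paper: both directions of each part are obtained by applying Theorem~\ref{thmmmomexp}, transporting the truncated expansion through the reciprocal $m_1=-1/m_2$ (the paper phrases the inversion step by noting the truncated sum's reciprocal is rational, you phrase it as a geometric-series expansion of the same relative depth — the same bookkeeping), and invoking Theorem~\ref{thmmmomexp} again. Your case analysis showing the constant term $s_{2,-1}\neq 0$ in part 2 is exactly the detail the paper asserts without proof, and you fill it in correctly.
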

 
 \begin{proof}
   We begin with the case when $m_2(z)/z\rightarrow\beta$ for some positive $\beta$ as $|z|\rightarrow\infty$ along the imaginary axis.
   If the moments of the measure $\rho_2$ exist up to order $2K$, then from Theorem~\ref{thmmmomexp} we infer that
   \begin{align*}
    \frac{m_2(z)}{\beta z} =  1 - \sum_{k=-1}^{2K} \frac{s_{2,k}}{\beta z^{k+2}} + \oo\biggl(\frac{1}{|z|^{2K+2}}\biggr)
   \end{align*} 
   as $|z|\rightarrow\infty$ along the imaginary axis for some real constants $s_{2,-1},\ldots,s_{2,2K}$.
   This implies that 
   \begin{align*}
     - \beta z m_1(z) = \frac{\beta z}{m_2(z)} =   \biggl(1 - \sum_{k=-1}^{2K} \frac{s_{2,k}}{\beta z^{k+2}}\biggr)^{-1}  +\oo\biggl(\frac{1}{|z|^{2K+2}}\biggr) 
   \end{align*}
   as $|z|\rightarrow\infty$ along the imaginary axis. 
   Since the first term on the right-hand side is a rational function, we conclude from Theorem~\ref{thmmmomexp} that the moments of the measure $\rho_1$ exist up to order $2K+2$. 
   Conversely, if the moments of the measure $\rho_1$ exist up to order $2K+2$, then we have 
   \begin{align*}
     - \beta z m_1(z) = 1 + \sum_{k=1}^{2K+2} \frac{\beta s_{1,k}}{z^{k}} + \oo\biggl(\frac{1}{|z|^{2K+2}}\biggr)
   \end{align*}
   as $|z|\rightarrow\infty$ along the imaginary axis for some real constants $s_{1,1},\ldots,s_{1,2K+2}$ and thus 
   \begin{align*}
     \frac{m_2(z)}{\beta z} = \frac{1}{-\beta z m_1(z)} = \biggl(1+\sum_{k=1}^{2K+2} \frac{\beta s_{1,k}}{z^{k}}\biggr)^{-1} + \oo\biggl(\frac{1}{|z|^{2K+2}}\biggr)
   \end{align*}
   as $|z|\rightarrow\infty$ along the imaginary axis. 
   Like before, we may conclude again that the moments of the measure $\rho_2$ exist up to order $2K$ by invoking  Theorem~\ref{thmmmomexp}. 
   
   Now let us assume that $m_1(z)/z\rightarrow0$ and $m_2(z)/z\rightarrow0$ as $|z|\rightarrow\infty$ along the imaginary axis.
   If the moments of the measure $\rho_2$ exist up to order $2K$, then from Theorem~\ref{thmmmomexp} we infer that 
      \begin{align*}
      -m_2(z) = s_{2,-1} + \sum_{k=0}^{2K} \frac{s_{2,k}}{z^{k+1}} + \oo\biggl(\frac{1}{|z|^{2K+1}}\biggr)
   \end{align*} 
   as $|z|\rightarrow\infty$ along the imaginary axis for some real constants $s_{2,-1},\ldots,s_{2,2K}$.
   Since $m_1(z)/z\rightarrow0$  as $|z|\rightarrow\infty$ along the imaginary axis, we conclude that $s_{2,-1}$ is not zero 
   and thus we have  
   \begin{align*}
     m_1(z) = \frac{1}{-m_2(z)} =  \biggl(s_{2,-1} + \sum_{k=0}^{2K} \frac{s_{2,k}}{z^{k+1}}\biggr)^{-1}  +\oo\biggl(\frac{1}{|z|^{2K+1}}\biggr) 
   \end{align*}
   as $|z|\rightarrow\infty$ along the imaginary axis. 
   As the first term on the right-hand side is a rational function, we infer from Theorem~\ref{thmmmomexp} that the moments of the measure $\rho_1$ exist up to order $2K$. 
  The converse direction follows by symmetry. 
 \end{proof}
 
 \begin{lemma}\label{lemHNclose}
   If there are Herglotz--Nevanlinna functions $\tilde{m}_1$ and $\tilde{m}_2$ with  
       \begin{align}
      \lim_{\eta\rightarrow\infty} \frac{1}{\I\eta} \biggl(\frac{1}{\tilde{m}_1(\I\eta)} - \frac{1}{\tilde{m}_2(\I\eta)} \biggr) = 0
    \end{align}
   such that the functions $m_i$ for $i\in\{1,2\}$ admit the continued fraction expansion 
    \begin{align}
   m_i(z) =  \dip_0 z + \omega_0 + \cfrac{1}{-l_{1}z + \cfrac{1}{\dip_1 z+\omega_1 + \cfrac{1}{ \;\ddots\;   + \cfrac{1}{-l_{N} z + \cfrac{1}{\tilde{m}_i(z)}}}}}\,, \quad z\in\C\backslash\R,
 \end{align}
   for  some $N\in\N$, non-negative constants $\dip_0,\ldots,\dip_{N-1}$, real constants $\omega_0,\ldots,\omega_{N-1}$ and positive constants $l_1,\ldots,l_N$ with $\dip_n+|\omega_n|>0$ for all $n\in\{1,\ldots,N-1\}$, then 
   \begin{align}\label{eqnmspolyclos}
     m_1(z) - m_2(z) = \oo\biggl(\frac{1}{|z|^{2K+1}}\biggr)
   \end{align}
   as $|z|\rightarrow\infty$ along the imaginary axis, where the integer $K\in\N_0$ is given by 
      \begin{align}
      K = N -1 + \#\{n\in\{1,\ldots,N-1\}\,|\,\dip_n\not=0\} + j 
   \end{align} 
   and $j$ is equal to one when  
   \begin{align}
     \lim_{\eta\rightarrow\infty} \frac{\tilde{m}_1(\I\eta)}{\I\eta} = \lim_{\eta\rightarrow\infty} \frac{\tilde{m}_2(\I\eta)}{\I\eta} >0
   \end{align}
   and zero otherwise. 
 \end{lemma}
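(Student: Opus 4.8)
The plan is to propagate the difference $m_1-m_2$ outward through the continued fraction, one level at a time, keeping track of the order of decay along the imaginary axis. For $i\in\{1,2\}$ I would introduce the partial tails $R^{(i)}_0,\ldots,R^{(i)}_N$ by setting $R^{(i)}_N=\tilde{m}_i$ and, for $n\in\{0,\ldots,N-1\}$,
\begin{align*}
 R^{(i)}_n(z) = \dip_n z + \omega_n + \cfrac{1}{-l_{n+1}z + \cfrac{1}{R^{(i)}_{n+1}(z)}}, \quad z\in\C\backslash\R,
\end{align*}
so that $R^{(i)}_0=m_i$; since $\dip_n\geq0$, $l_{n+1}>0$ and $\omega_n\in\R$, each $R^{(i)}_n$ is again a Herglotz--Nevanlinna function. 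Abbreviating $A^{(i)}_n=-l_{n+1}z+1/R^{(i)}_{n+1}$ and $\epsilon_n=1/R^{(1)}_n-1/R^{(2)}_n$, a direct computation with the recursion gives both $R^{(1)}_n-R^{(2)}_n=-\epsilon_{n+1}/(A^{(1)}_nA^{(2)}_n)$ and $\epsilon_n=-(R^{(1)}_n-R^{(2)}_n)/(R^{(1)}_nR^{(2)}_n)$, hence $\epsilon_n=\epsilon_{n+1}/(R^{(1)}_nR^{(2)}_nA^{(1)}_nA^{(2)}_n)$ for $n\in\{1,\ldots,N-1\}$ and $m_1-m_2=-\epsilon_1/(A^{(1)}_0A^{(2)}_0)$. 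Iterating yields the closed expression
\begin{align*}
 m_1 - m_2 = -\,\epsilon_N\,\frac{1}{A^{(1)}_0A^{(2)}_0}\prod_{n=1}^{N-1}\frac{1}{R^{(1)}_nR^{(2)}_nA^{(1)}_nA^{(2)}_n}, \quad \epsilon_N = \frac{1}{\tilde{m}_1}-\frac{1}{\tilde{m}_2}.
\end{align*}

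Next I would establish the growth of the factors along the imaginary axis $z=\I\eta$. The key point is a non-cancellation estimate for $A^{(i)}_n$: as $R^{(i)}_{n+1}$ is Herglotz--Nevanlinna, so is $-1/R^{(i)}_{n+1}$, whence $\mathrm{Im}(1/R^{(i)}_{n+1}(\I\eta))\leq0$; this adds to the strictly negative imaginary part of $-l_{n+1}\I\eta$ without cancellation, so that $|A^{(i)}_n(\I\eta)|\geq l_{n+1}\eta$ and $1/A^{(i)}_n(\I\eta)=\OO(\eta^{-1})$. Feeding this back in, $R^{(i)}_n(\I\eta)=\dip_n\I\eta+\omega_n+\OO(\eta^{-1})$, so that $1/R^{(i)}_n(\I\eta)=\OO(\eta^{-1})$ when $\dip_n\neq0$, while $R^{(i)}_n(\I\eta)\to\omega_n\neq0$ and $1/R^{(i)}_n(\I\eta)=\OO(1)$ when $\dip_n=0$; here the hypothesis $\dip_n+|\omega_n|>0$ guarantees $\omega_n\neq0$ in the latter case.

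Combining these estimates in the product expression, there are $2N$ factors of the form $A^{(i)}_n$, each contributing $\OO(\eta^{-1})$, together with the factors $R^{(i)}_n$ for $n\in\{1,\ldots,N-1\}$, of which exactly $2P$ contribute $\OO(\eta^{-1})$ and the rest $\OO(1)$, where $P=\#\{n\in\{1,\ldots,N-1\}\,|\,\dip_n\neq0\}$. Hence the product multiplying $\epsilon_N$ is $\OO(\eta^{-2N-2P})$ and
\begin{align*}
 m_1(\I\eta)-m_2(\I\eta) = \epsilon_N(\I\eta)\cdot\OO(\eta^{-2N-2P}).
\end{align*}

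It remains to insert the size of $\epsilon_N$. When $j=0$ I would use only the hypothesis $\epsilon_N(\I\eta)=\oo(\eta)$, which already gives $\oo(\eta^{-(2N+2P-1)})=\oo(\eta^{-(2K+1)})$ since $K=N+P-1$ in this case. When $j=1$, both $\tilde{m}_i(\I\eta)/\I\eta$ tend to the same $\beta>0$, so writing $\tilde{m}_i(\I\eta)=\beta\I\eta(1+\oo(1))$ and expanding shows that the leading $1/(\beta\I\eta)$ terms cancel, sharpening the hypothesis to $\epsilon_N(\I\eta)=\oo(\eta^{-1})$; this extra gain of two orders yields $\oo(\eta^{-(2N+2P+1)})=\oo(\eta^{-(2K+1)})$ with $K=N+P$. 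I expect the main work to be the non-cancellation estimate for $A^{(i)}_n$, which hinges on the sign of the imaginary part of Herglotz--Nevanlinna functions on the imaginary axis, together with the observation that matching positive leading coefficients of $\tilde{m}_1$ and $\tilde{m}_2$ improve $\epsilon_N$ by precisely two orders, which is exactly what the term $j$ records.
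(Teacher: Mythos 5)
Your proposal is correct and follows essentially the same route as the paper's proof: the paper likewise defines the continued fraction tails recursively, propagates the difference level by level using the bounds $|{-l_{n+1}z+1/R^{(i)}_{n+1}}|\gtrsim|z|$ and $1/R^{(i)}_n=\OO(|z|^{-1})$ or $\OO(1)$ according to whether $\dip_n\not=0$ (via the Herglotz property and $\dip_n+|\omega_n|>0$), and then inserts the innermost difference $\tilde{m}_1^{-1}-\tilde{m}_2^{-1}$, which is $\oo(|z|)$ in general and improves to $\oo(|z|^{-1})$ exactly when the leading coefficients match, accounting for $j$. The only cosmetic difference is that you telescope the recursion into a closed product formula and spell out the non-cancellation estimates that the paper subsumes under ``a computation then shows.''
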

 
 \begin{proof}
  For $i\in\{1,2\}$, let us define Herglotz-Nevanlinna functions $q_{i,1},\ldots,q_{i,N}$ by 
  \begin{align*}
    \frac{1}{q_{i,N}(z)} = - l_N z + \frac{1}{\tilde{m}_i(z)}, \quad z\in\C\backslash\R, 
   \end{align*}
  and for $n\in\{1,\ldots,N-1\}$ recursively via  
  \begin{align*}
    \frac{1}{q_{i,n}(z)} = - l_{n} z + \frac{1}{\dip_{n} z + \omega_{n} + q_{i,n+1}(z)}, \quad z\in\C\backslash\R,
  \end{align*}
  so that $m_i=\dip_0 z +\omega_0 + q_{i,1}$ by assumption. 
 A computation then shows that 
 \begin{align*}
    \frac{1}{q_{1,n}(z)} - \frac{1}{q_{2,n}(z)} & = \OO\biggl(\frac{|q_{1,n+1}(z) - q_{2,n+1}(z)|}{|z|^{2j_n}}\biggr), & j_n & = \begin{cases} 0, & \dip_n=0, \\ 1, & \dip_n\not=0, \end{cases}
 \end{align*}
 as $|z|\rightarrow\infty$ along the imaginary axis for all $n\in\{1,\ldots,N-1\}$, which yields 
 \begin{align*}
   q_{1,n}(z) - q_{2,n}(z) &  = \frac{q_{2,n}(z)^{-1}-q_{1,n}(z)^{-1}}{q_{1,n}(z)^{-1}q_{2,n}(z)^{-1}} =  \OO\biggl(\frac{|q_{1,n+1}(z) - q_{2,n+1}(z)|}{|z|^{2+2j_n}}\biggr) 
 \end{align*}
 as $|z|\rightarrow\infty$ along the imaginary axis.  
 Since this entails that 
 \begin{align*}
   m_1(z) - m_2(z) = q_{1,1}(z) - q_{2,1}(z)  = \OO\biggl(\frac{|q_{1,N}(z) - q_{2,N}(z)|}{|z|^{2K -2j}}\biggr)
 \end{align*}
 as $|z|\rightarrow\infty$ along the imaginary axis, it remains to note that 
  \begin{align*}
   q_{1,N}(z) - q_{2,N}(z) = \frac{\tilde{m}_{2}(z)^{-1}-\tilde{m}_{1}(z)^{-1}}{q_{1,N}(z)^{-1}q_{2,N}(z)^{-1}} = \oo\biggl(\frac{1}{|z|^{1+2j}}\biggr)
 \end{align*}
 as $|z|\rightarrow\infty$ along the imaginary axis. 
 \end{proof}

 With the help of these two lemmas, we are now ready to add another item to the equivalence in Theorem~\ref{thmmmomexp}; a continued fraction expansion of the function $m$. 
 
 \begin{theorem}\label{thmmKexp}
   Suppose that $m$ is a non-rational Herglotz--Nevanlinna function and let $K\in\N_0$. 
   Then the moments of the measure $\rho$ exist up to order $2K$ if and only if there is a Herglotz--Nevanlinna function $\tilde{m}$, an integer $N\in\N$, non-negative constants  $\dip_0,\ldots,\dip_N$, real constants $\omega_0,\ldots,\omega_{N-1}$ and positive constants $l_1,\ldots,l_N$ with $\dip_n + |\omega_n|>0$ for all $n\in\{1,\ldots,N-1\}$ and 
   \begin{align}\label{eqnmKcond}
      N - 1 + \#\{n\in\{1,\ldots,N\}\,|\,\dip_n\not=0\} \geq K
   \end{align}
    such that the function $m$ admits the continued fraction expansion 
    \begin{align}\label{eqnmExpK}
   m(z) = \dip_0 z +  \omega_0 + \cfrac{1}{-l_{1}z + \cfrac{1}{\dip_1 z + \omega_1 + \cfrac{1}{ \;\ddots\;   + \cfrac{1}{-l_{N} z + \cfrac{1}{\dip_N z + \tilde{m}(z)}}}}}\,, \quad z\in\C\backslash\R.
 \end{align}
 \end{theorem}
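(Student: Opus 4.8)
The plan is to prove both implications by comparing $m$ with a suitable \emph{rational} Herglotz--Nevanlinna function and invoking Theorem~\ref{thmmmomexp}, which converts the existence of moments into an asymptotic expansion along the imaginary axis.

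For the direction asserting that the continued fraction expansion forces the moments to exist, suppose $m$ is given by~\eqref{eqnmExpK}. I would first build a rational comparison function $m_\ast$ sharing the coefficients $\dip_0,\ldots,\dip_{N-1}$, $\omega_0,\ldots,\omega_{N-1}$ and $l_1,\ldots,l_N$ together with $\dip_N$, but with the Herglotz--Nevanlinna tail $\dip_N z + \tilde m$ replaced by $\dip_N z + \tilde m_\ast$ for a conveniently chosen \emph{rational} Herglotz--Nevanlinna function $\tilde m_\ast$; by the observation following Proposition~\ref{propmrat}, $m_\ast$ is then rational, so all of its moments exist. The point is to pick $\tilde m_\ast$ so that $\dip_N z + \tilde m_\ast$ has the same linear growth as $\dip_N z + \tilde m$ and so that the two reciprocals differ only at sublinear order, i.e.\ so that the closeness hypothesis of Lemma~\ref{lemHNclose} holds; since $\tilde m_\ast$ is at our disposal this only amounts to matching one or two leading asymptotic coefficients, and I expect to split into the cases where $\dip_N z + \tilde m$ grows linearly or stays sublinear. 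Applying Lemma~\ref{lemHNclose} to $m_1 = m$ and $m_2 = m_\ast$ with the common tails $\dip_N z + \tilde m$ and $\dip_N z + \tilde m_\ast$ then yields $m(z) - m_\ast(z) = \oo(1/|z|^{2K'+1})$ with $K' = N - 1 + \#\{n\in\{1,\ldots,N-1\}\,|\,\dip_n\neq0\} + j$. A short bookkeeping check, distinguishing $\dip_N = 0$ from $\dip_N > 0$, shows that this $K'$ equals or dominates the quantity $N-1+\#\{n\in\{1,\ldots,N\}\,|\,\dip_n\neq0\}$ in~\eqref{eqnmKcond}, so $K' \geq K$. As $m_\ast$ is rational it satisfies~\eqref{eqnmexpinf} to every order, whence the estimate transfers the expansion up to order $2K$ to $m$, and Theorem~\ref{thmmmomexp} delivers the moments.

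For the converse, I would assume the moments exist up to order $2K$ and peel the continued fraction off $m$ directly. Writing $\dip_0$ for the linear coefficient $b$ of $m$ and $\omega_0 = -s_{-1}$, the function $q_{\kappa(1)} = m - \dip_0 z - \omega_0$ is a sublinear Herglotz--Nevanlinna function whose measure still has moments up to order $2K$. I then iterate the two elementary operations making up one period: invert with a sign (pass to $-1/q$, which is again Herglotz--Nevanlinna) and subtract the exact linear and constant parts. Since the extracted linear coefficients are the genuine coefficients $b\ge 0$ of the intermediate functions and the subtracted constants are their genuine constant terms, each operation preserves the Herglotz--Nevanlinna class. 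Whether a period is \emph{single} ($\dip_n = 0$) or \emph{double} ($\dip_n > 0$) is dictated by whether the sublinear function reached after extracting $l_n z$ tends to a nonzero constant or to zero: in the first case the inner reciprocal is again sublinear and Lemma~\ref{lemMomRel}(2) shows the moment order drops by $2$, while in the second case it becomes linear and Lemma~\ref{lemMomRel}(1) shows it drops by $4$. Thus a single period contributes $1$ and costs two moments, a double period contributes $2$ and costs four, so each unit of the counter $N-1+\#\{n\,|\,\dip_n\neq0\}$ corresponds to exactly two consumed moments; stopping the peeling once the budget $2K$ is exhausted—taking the final, partial period with $\dip_N$ equal to the (possibly vanishing) leading coefficient of the last linear piece and the remaining Herglotz--Nevanlinna function as the tail $\tilde m$—produces an expansion with counter exactly $K$. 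The single/double dichotomy simultaneously forces $\dip_n + |\omega_n| > 0$ for $n \in \{1,\ldots,N-1\}$, since in the single case $\omega_n$ is the nonzero constant limit of the relevant reciprocal.

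The main obstacle I anticipate is the moment bookkeeping at the end of the peeling: verifying that the budget $2K$ is consumed in precisely the steps needed to reach counter value $K$ and that the process terminates in a genuine Herglotz--Nevanlinna tail even at the boundary where only the $0$-th moment of the current tail survives. There one must check that truncating with $\dip_N = 0$ and tail $-1/\hat m$ is legitimate and still gives the claimed lower bound on the counter. The parallel subtlety in the first direction is reconciling the constant $K'$ produced by Lemma~\ref{lemHNclose}, which counts $\dip_1,\ldots,\dip_{N-1}$ together with the growth index $j$, with the symmetric count over $\dip_1,\ldots,\dip_N$ in the theorem; this is exactly where the freedom in choosing the rational tail $\tilde m_\ast$ is spent.
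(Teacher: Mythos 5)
Your proposal is correct and takes essentially the same route as the paper: the forward direction is the paper's argument almost verbatim (replace the tail by a rational Herglotz--Nevanlinna function with matched leading asymptotics, apply Lemma~\ref{lemHNclose}, then Theorem~\ref{thmmmomexp}), and your converse peels the fraction off with the same two operations and the same Lemma~\ref{lemMomRel} moment accounting. The only difference is organizational: the paper runs the converse as an induction on $K$, extracting one level per step so that the induction hypothesis absorbs the endgame bookkeeping you flag as the main obstacle; your explicit budget tracking also closes, since a forced stop in the ``double'' case automatically yields $\dip_N>0$ and hence a counter of at least $K$.
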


 \begin{proof}
   Assume first that $m$ admits such a continued fraction expansion.
   We define the rational Herglotz--Nevanlinna function $m_1$ by replacing $\tilde{m}$ in the continued fraction on the right-hand side of~\eqref{eqnmExpK} with the function given by 
   \begin{align}\label{eqnReplacemtilde}
       z \lim_{\eta\rightarrow\infty} \frac{\tilde{m}(\I\eta)}{\I\eta} + \biggl(1+z \lim_{\eta\rightarrow\infty} \frac{1}{\I\eta}\frac{1}{\tilde{m}(\I\eta)}\biggr)^{-1}, \quad z\in\C\backslash\R.
   \end{align}
   It then follows from Lemma~\ref{lemHNclose} that 
   \begin{align}\label{eqnmm1asym}
     m(z) =  m_1(z) + \oo\biggl(\frac{1}{|z|^{2K+1}}\biggr)
   \end{align}
   as $|z|\rightarrow\infty$ along the imaginary axis. 
   In view of Theorem~\ref{thmmmomexp}, this already guarantees that the moments of the measure $\rho$ exist up to order $2K$. 

  For the converse direction, we will use induction. 
 To this end, let us first consider the case when $K$ is zero, that is, we assume that the measure $\rho$ is finite, so that   
   \begin{align*}
    -\biggl(\int_\R \frac{1}{\lambda-z}d\rho(\lambda)\biggr)^{-1} = \frac{z}{s_0} + q(z), \quad z\in\C\backslash\R, 
   \end{align*}
   for some Herglotz--Nevanlinna function $q$ with $q(z)/z\rightarrow0$ as $|z|\rightarrow\infty$ along the imaginary axis.
   This allows us to write  
   \begin{align}\label{eqnmexps0}
     m(z) = -s_{-2}z -s_{-1} + \cfrac{1}{-\cfrac{z}{s_0} + \cfrac{1}{\dip_1 z + \tilde{m}(z)}}\,, \quad z\in\C\backslash\R, 
   \end{align}
   for some non-negative constant $\dip_1$ and a Herglotz--Nevanlinna function $\tilde{m}$, which is the claimed expansion. 
  Now let $K\in\N$ and suppose that the assertion holds for all lesser integers.  
  As before, we may write $m$ like in~\eqref{eqnmexps0}  for some non-negative constant $\dip_1$ and a Herglotz--Nevanlinna function $\tilde{m}$ with $\tilde{m}(z)/z\rightarrow0$ as $|z|\rightarrow\infty$ along the imaginary axis. 
  In the case when $K=1$ and $\dip_1\not=0$, this is the required expansion. 
  Otherwise, we conclude from Lemma~\ref{lemMomRel} that the moments of the measure $\tilde{\rho}$ corresponding to the function $\tilde{m}$ exist up to order 
  \begin{align*}
     2(K - 1 - j) & \geq 0, & j& =\begin{cases} 0, & \dip_1=0, \\ 1, & \dip_1\not=0. \end{cases}
  \end{align*}
  Since the function $\tilde{m}$ then admits a continued fraction expansion of the claimed form, we readily see that so does $m$ (it only remains to note that $\dip_1+|\omega_1|>0$ in this expansion since $q(z)/z\rightarrow0$ as $|z|\rightarrow\infty$ along the imaginary axis). 
 \end{proof}
 
  As in the rational case, we are able to provide explicit formulas for the constants in the continued fraction expansion in terms of the Hankel determinants. 
   To this end, let us suppose that $m$ is a non-rational Herglotz--Nevanlinna function such that the moments of the measure $\rho$ exist up to order $2K$ for some $K\in\N_0$. 
  Under these assumptions, the determinants $\Delta_{0,k}$ are well-defined for all $k\in\{0,\ldots,K+1\}$ and positive (as $\rho$ must not be supported on a finite set). 
  The determinants $\Delta_{1,k}$ on the other hand exist at least for $k\in\{0,\ldots,K\}$. 
  We define $N\in\N$ as the number of non-zero elements of the sequence $\Delta_{1,0},\ldots,\Delta_{1,K}$ and introduce the function 
  \begin{align}
    \kappa: \{1,\ldots,N\}\rightarrow\{0,\ldots,K\}
  \end{align}
  such that $\kappa(n)$ is the smallest integer $k\in\{0,\ldots,K\}$ for which the sequence $\Delta_{1,0},\ldots,\Delta_{1,k}$ has precisely $n$ non-zero elements. 
  One observes that the increasing function $\kappa$ is defined in such a way that $\Delta_{1,\kappa(1)},\ldots,\Delta_{1,\kappa(N)}$ enumerates all non-zero members of the sequence $\Delta_{1,0},\ldots,\Delta_{1,K}$. 
  As it follows from relation~\eqref{eqnDeltaRel}  that there are no consecutive zeros in the sequence $\Delta_{1,0},\ldots,\Delta_{1,K}$, we may conclude that 
  \begin{align}
    \kappa(n+1) = \begin{cases} \kappa(n)+1, & \Delta_{1,\kappa(n)+1}\not=0, \\ \kappa(n)+2, & \Delta_{1,\kappa(n)+1}=0, \end{cases}
  \end{align} 
  for all $n\in\{1,\ldots,N-1\}$. 
  Since we have $\kappa(1)=0$, this determines $\kappa$ recursively. 
  
 \begin{corollary}\label{cormKexpForm}
   If $m$ is a non-rational Herglotz--Nevanlinna function such that the moments of the measure $\rho$ exist up to order $2K$ for some $K\in\N_0$, then there is a Herglotz--Nevanlinna function $\tilde{m}$ with 
   \begin{align}
     \lim_{\eta\rightarrow\infty} \frac{1}{\I\eta} \frac{1}{\tilde{m}(\I\eta)} = 0
   \end{align}
   such that the function $m$ admits the continued fraction expansion   
       \begin{align}\label{eqnmExpKCor}
   m(z) = \dip_0 z +  \omega_0 + \cfrac{1}{-l_{1}z + \cfrac{1}{\dip_1 z + \omega_1 + \cfrac{1}{ \;\ddots\;   + \cfrac{1}{-l_{N} z + \cfrac{1}{\tilde{m}(z)}}}}}\,, \quad z\in\C\backslash\R,
 \end{align}
 where the non-negative constants $\dip_0,\ldots,\dip_{N-1}$ and the real constants $\omega_0,\ldots,\omega_{N-1}$ are given by~\eqref{eqnomegadipnDel} 
  and the positive constants $l_1,\ldots,l_{N}$ are given by~\eqref{eqnlnDel}. 
  Furthermore, if the determinant $\Delta_{1,K}$ is zero, then $\kappa(N)=K-1$ and the function $\tilde{m}$ satisfies 
  \begin{align}\label{eqnmtillin}
    \lim_{\eta\rightarrow\infty} \frac{\tilde{m}(\I\eta)}{\I\eta} = \frac{\Delta_{-1,\kappa(N)+2}^2}{\Delta_{0,\kappa(N)+1}\Delta_{0,\kappa(N)+2}}>0.
  \end{align}
 \end{corollary}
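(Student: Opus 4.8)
The plan is to deduce the corollary from the rational case, Proposition~\ref{propmrat}, by a rational approximation that preserves the relevant moments. The existence of an expansion of the shape~\eqref{eqnmExpKCor} is essentially contained in Theorem~\ref{thmmKexp}: taking $N$ to be the number of non-zero elements among $\Delta_{1,0},\dots,\Delta_{1,K}$ as in the paragraph preceding the corollary, peeling off exactly $N$ levels and absorbing the final summand $\dip_N z$ of~\eqref{eqnmExpK} into the tail turns~\eqref{eqnmExpK} into~\eqref{eqnmExpKCor}, with the normalisation $\lim_{\eta\to\infty}\frac{1}{\I\eta}\frac{1}{\tilde{m}(\I\eta)}=0$ (which merely records that the remainder $\tilde{m}$ does not vanish at $\I\infty$). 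Thus the real content is to identify the constants with the Hankel-determinant formulas~\eqref{eqnomegadipnDel} and~\eqref{eqnlnDel} and to establish the last assertion~\eqref{eqnmtillin}.

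First I would pin down that the constants occurring at the first $N$ levels of such an expansion depend only on the coefficients of the asymptotic expansion~\eqref{eqnmexpinf}, that is, on $s_{-2},\dots,s_{2K}$. Indeed, the expansion is recovered from $m$ by a deterministic peeling: at each level the current Herglotz--Nevanlinna remainder $g$ fixes $\dip_n=\lim_{\eta\to\infty}g(\I\eta)/\I\eta$, then $\omega_n=\lim_{\eta\to\infty}(g(\I\eta)-\dip_n\I\eta)$, and $l_{n+1}$ is read off from the reciprocal of $g(z)-\dip_n z-\omega_n$; the condition $\dip_n+|\omega_n|>0$ forbids an empty level and makes the bookkeeping unambiguous, so the coefficients are uniquely determined. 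Moreover each determinant entering~\eqref{eqnomegadipnDel} and~\eqref{eqnlnDel} involves moments of order at most $2K$ (recall that $\Delta_{1,k}$ uses $s_1,\dots,s_{2k-1}$ and is available for $k\le K$, while $\Delta_{0,k}$ uses $s_0,\dots,s_{2k-2}$ and is available for $k\le K+1$), so that $N$ and the increasing function $\kappa$ are likewise read off from $\Delta_{1,0},\dots,\Delta_{1,K}$.

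Next I would produce a rational comparison function. Replacing $\tilde{m}$ in~\eqref{eqnmExpKCor} by the rational Herglotz--Nevanlinna function~\eqref{eqnReplacemtilde}, which by construction shares the two limits $\lim_{\eta\to\infty}\tilde{m}(\I\eta)/\I\eta$ and $\lim_{\eta\to\infty}\frac{1}{\I\eta}\frac{1}{\tilde{m}(\I\eta)}$ with $\tilde{m}$, yields a rational Herglotz--Nevanlinna function $m_1$ whose first $N$ levels are those of $m$. Lemma~\ref{lemHNclose} then gives $m(z)-m_1(z)=\oo(1/|z|^{2K+1})$ as $|z|\to\infty$ along the imaginary axis, so $m$ and $m_1$ have the same moments $s_{-2},\dots,s_{2K}$ and hence the same Hankel determinants, $N$ and $\kappa$ in the range relevant to~\eqref{eqnomegadipnDel} and~\eqref{eqnlnDel}. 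Applying Proposition~\ref{propmrat} to the rational function $m_1$ expresses its first $N$ levels exactly through those formulas; by the uniqueness of the peeling these coincide with the first $N$ levels of $m$, which proves~\eqref{eqnomegadipnDel} and~\eqref{eqnlnDel} for $m$.

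Finally, for the supplementary claim I would argue as follows. If $\Delta_{1,K}=0$, then relation~\eqref{eqnDeltaRel} rules out consecutive zeros in $\Delta_{1,0},\dots,\Delta_{1,K}$, so $\Delta_{1,K-1}\neq 0$ and $\kappa(N)=K-1$. The level of $m_1$ directly following the $N$-th then has $\Delta_{1,\kappa(N)+1}=\Delta_{1,K}=0$, so Remark~\ref{remCFcoefRexp} identifies its coefficient $\dip_N$ with the right-hand side of~\eqref{eqnmtillin} and shows it is positive; since this very coefficient becomes $\lim_{\eta\to\infty}\tilde{m}(\I\eta)/\I\eta$ after the $\dip_N z$-term is absorbed into $\tilde{m}$, we obtain~\eqref{eqnmtillin}. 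The main obstacle I expect is the indexing bookkeeping together with the uniqueness step: one must verify carefully that all determinants in~\eqref{eqnomegadipnDel} and~\eqref{eqnlnDel} are well-defined from the moments up to order $2K$ and that $N$, $\kappa$ genuinely coincide for $m$ and $m_1$, and one must justify that a rational Herglotz--Nevanlinna function admits only one expansion of the prescribed form, so that Proposition~\ref{propmrat} and the construction of $m_1$ refer to the same coefficients.
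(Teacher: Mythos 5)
Your proposal is correct and follows essentially the same route as the paper's own proof: existence of the expansion from Theorem~\ref{thmmKexp}, the rational comparison function built by substituting~\eqref{eqnReplacemtilde} for the tail, Lemma~\ref{lemHNclose} together with Theorem~\ref{thmmmomexp} to match the moments $s_{-2},\ldots,s_{2K}$ and hence the Hankel determinants, and uniqueness of the (normalized) expansion to import the explicit formulas from Proposition~\ref{propmrat} and Remark~\ref{remCFcoefRexp}. The one point you gloss over --- that after absorbing $\dip_N z$ the tail can indeed be arranged to satisfy $\lim_{\eta\rightarrow\infty}\frac{1}{\I\eta}\frac{1}{\tilde{m}(\I\eta)}=0$, which when $\dip_N=0$ may require shifting a linear term from $1/\tilde{m}$ into $l_N$ --- is treated exactly as tersely in the paper (``after possibly redefining \ldots the constant $l_N$ and the function $\tilde{m}$ in an appropriate way'').
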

 
 \begin{proof}
  Suppose that $m$ is a non-rational Herglotz--Nevanlinna function such that the moments of the measure $\rho$ exist up to order $2K$ for some $K\in\N_0$. 
  According to Theorem~\ref{thmmKexp}, the function $m$ admits a continued fraction expansion of the form~\eqref{eqnmExpK}. 
  With this notation, we define the rational Herglotz--Nevanlinna function $m_1$  by replacing $\tilde{m}$ in the continued fraction on the right-hand side of~\eqref{eqnmExpK} with the function given by~\eqref{eqnReplacemtilde} so that~\eqref{eqnmm1asym} as $|z|\rightarrow\infty$ along the imaginary axis.
  By virtue of the expansion~\eqref{eqnmexpinf} in Theorem~\ref{thmmmomexp}, this shows that the numbers $s_{-2},s_{-1},s_0,\ldots,s_{2K}$ corresponding to the functions $m$ and $m_1$ coincide and hence so do the Hankel determinants $\Delta_{0,0},\ldots,\Delta_{0,K+1}$, $\Delta_{1,0},\ldots,\Delta_{1,K}$, $\Delta_{2,0},\ldots,\Delta_{2,K}$, $\Delta_{-1,0},\ldots,\Delta_{-1,K+1}$ and $\Delta_{-2,0},\ldots,\Delta_{-2,K+2}$.
  In particular, we find that our current function $\kappa$ is a restriction of (or coincides with) the corresponding function~\eqref{eqnkapparat} for $m_1$. 
  Since the continued fraction expansion of the rational function $m_1$ is unique (due to the preconditions that the constants $l_1,\ldots,l_N$ are positive and that $\dip_n+|\omega_n|>0$ for all $n\in\{1,\ldots,N-1\}$), we may obtain expressions for the constants in this expansion in terms of the Hankel determinants by comparison with Proposition~\ref{propmrat}.  
  After possibly redefining the integer $N$ (because the $N$ one gets from Theorem~\ref{thmmKexp} is potentially greater than the number of non-zero elements of the sequence $\Delta_{1,0},\ldots,\Delta_{1,K}$, whereas~\eqref{eqnmKcond} guarantees that it is not less), the constant $l_N$ and the function $\tilde{m}$ in an appropriate way, we see that the expansion~\eqref{eqnmExpK} can be brought into the claimed form.
 \end{proof}

 We now turn to the situation when $m$ is a non-rational Herglotz--Nevanlinna function such that all moments of the measure $\rho$ exist. 
  In this case, all the Hankel determinants are well-defined and the function $\kappa$ extends to an increasing function 
  \begin{align}
    \kappa: \N\rightarrow\N_0
  \end{align}
  such that $\Delta_{1,\kappa(1)},\Delta_{1,\kappa(2)},\ldots$ enumerates all non-zero elements of the sequence $\Delta_{1,0},\Delta_{1,1},\ldots$ (of which there are infinitely many as the sequence does not contain any consecutive zeros).
   
 \begin{corollary}\label{cormexp}
   Suppose that $m$ is a non-rational Herglotz--Nevanlinna function. 
   Then all moments of the measure $\rho$ exist if and only if there are Herglotz--Nevanlinna functions $\tilde{m}_1,\tilde{m}_2,\ldots$, non-negative constants  $\dip_0,\dip_1,\ldots$, real constants $\omega_0,\omega_1,\ldots$ and positive constants $l_1,l_2,\ldots$ with $\dip_n + |\omega_n|>0$ for all $n\in\N$ such that for every $N\in\N$ the function $m$ admits the continued fraction expansion  
    \begin{align}
   m(z) = \dip_0 z +  \omega_0 + \cfrac{1}{-l_{1}z + \cfrac{1}{\dip_1 z + \omega_1 + \cfrac{1}{ \;\ddots\;   + \cfrac{1}{-l_{N} z + \cfrac{1}{\tilde{m}_N(z)}}}}}\,, \quad z\in\C\backslash\R.
 \end{align}
 In this case, the non-negative constants $\dip_0,\dip_1,\ldots$ and the real constants $\omega_0,\omega_1,\ldots$ are given by~\eqref{eqnomegadipnDel} and the positive constants $l_1,l_2,\ldots$ are given by~\eqref{eqnlnDel}. 
 \end{corollary}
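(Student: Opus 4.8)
The plan is to deduce this infinite-level statement from the finite-level results already established, so that the proof becomes an assembly argument together with one limiting observation. Throughout I would use that, when all moments of $\rho$ exist, every Hankel determinant is well-defined, the determinants $\Delta_{0,k}$ are positive, and the enumerating function $\kappa\colon\N\to\N_0$ for the non-zero members of $\Delta_{1,0},\Delta_{1,1},\ldots$ is available as described in the paragraph preceding the statement.

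For the necessity direction I would assume that all moments of $\rho$ exist and fix an arbitrary $N\in\N$. Since all moments exist, in particular they exist up to order $2\kappa(N)$, so Corollary~\ref{cormKexpForm} applies with $K=\kappa(N)$. By the very definition of $\kappa$, the sequence $\Delta_{1,0},\ldots,\Delta_{1,\kappa(N)}$ has precisely $N$ non-zero entries and $\Delta_{1,\kappa(N)}\not=0$; hence the number denoted $N$ in Corollary~\ref{cormKexpForm} coincides with our $N$, and it produces an $N$-level expansion of exactly the claimed shape with a Herglotz--Nevanlinna tail $\tilde m_N$, constants $\dip_0,\ldots,\dip_{N-1}$ and $\omega_0,\ldots,\omega_{N-1}$ given by~\eqref{eqnomegadipnDel} and $l_1,\ldots,l_N$ given by~\eqref{eqnlnDel}, the non-degeneracy $\dip_n+|\omega_n|>0$ being inherited from Remark~\ref{remCFcoefRexp}. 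The crucial point is that these determinantal formulas are intrinsic to $m$, involving only the moments and $\kappa$ and not $K$, so the constants obtained from two different choices of $N$ agree on all shared indices. This yields one consistent sequence $\dip_n$, $\omega_n$, $l_n$ together with the tails $\tilde m_N$, which establishes the family of expansions and the asserted formulas.

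For the sufficiency direction I would suppose that for every $N$ such an $N$-level expansion exists and fix $N$. Writing $\dip_N:=\lim_{\eta\to\infty}\tilde m_N(\I\eta)/\I\eta\geq0$ for the linear coefficient of the Herglotz--Nevanlinna tail and $\tilde m:=\tilde m_N-\dip_N z$, the function $\tilde m$ is again Herglotz--Nevanlinna with $\tilde m(\I\eta)/\I\eta\to0$, and the expansion takes exactly the form~\eqref{eqnmExpK} of Theorem~\ref{thmmKexp}. Choosing $K=N-1$, condition~\eqref{eqnmKcond} is satisfied, so the easy (sufficiency) direction of Theorem~\ref{thmmKexp} guarantees that the moments of $\rho$ exist up to order $2(N-1)$. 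Letting $N\to\infty$ then forces all moments of $\rho$ to exist.

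The substance of the statement is already contained in Theorem~\ref{thmmKexp} and Corollary~\ref{cormKexpForm}, so I expect no genuine obstacle, only two bookkeeping matters. The first is reconciling the slightly different normalizations at the innermost level: the tail in~\eqref{eqnmExpKCor} carries no explicit $\dip_N z$ term, whereas~\eqref{eqnmExpK} does, which is handled by splitting off the linear part of $\tilde m_N$ as above. The second, and the one deserving most attention, is verifying the consistency of the constants across all $N$, since this is precisely what allows the separate finite expansions to be merged into a single infinite object; this rests entirely on the universality of the formulas~\eqref{eqnomegadipnDel} and~\eqref{eqnlnDel}.
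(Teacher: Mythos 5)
Your proposal is correct and follows essentially the same route as the paper: the paper's proof is a one-line appeal to Theorem~\ref{thmmKexp} and Corollary~\ref{cormKexpForm} together with the observation that the constants in~\eqref{eqnmExpKCor} are independent of $N$, which is exactly the ``universality of the determinantal formulas'' point you identify as the crux. Your write-up merely spells out the details (choosing $K=\kappa(N)$ for necessity, splitting off the linear part of $\tilde m_N$ and taking $K=N-1$ for sufficiency) that the paper leaves implicit.
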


 \begin{proof}
 It suffices to refer to Theorem~\ref{thmmKexp} and Corollary~\ref{cormKexpForm}, but let us note that the constants in the continued fraction expansion in~\eqref{eqnmExpKCor} are independent of $N$. 
 \end{proof}
 
 Since the last result in this section will not be needed in the following, we shall state it without a proof. 
 In fact, a convenient way to verify it uses one of the forthcoming results from the next section. 
     
  \begin{corollary}\label{cormexplim}
   Suppose that $m$ is a non-rational Herglotz--Nevanlinna function. 
   Then all moments of the measure $\rho$ exist and
   \begin{align}\label{eqnmexprhocond}
     \rho(\{0\}) = \lim_{k\rightarrow\infty} \frac{\Delta_{0,k+1}}{\Delta_{2,k}}
   \end{align} 
    if and only if there are non-negative constants $\dip_0,\dip_1,\ldots$, real constants $\omega_0,\omega_1,\ldots$ and  positive constants $l_1,l_2,\ldots$ with $\dip_n + |\omega_n|>0$ for all $n\in\N$ such that  
   \begin{align}\label{eqnmlimcontfrac}
     m(z) = \lim_{N\rightarrow\infty} \dip_0 z +  \omega_0 + \cfrac{1}{-l_{1}z + \cfrac{1}{\dip_1 z + \omega_1 + \cfrac{1}{ \;\ddots\;   + \cfrac{1}{-l_{N} z}}}}\,, \quad z\in\C\backslash\R.
   \end{align}
   In this case, the non-negative constants $\dip_0,\dip_1,\ldots$ and the real constants $\omega_0,\omega_1,\ldots$ are given by~\eqref{eqnomegadipnDel} and the positive constants $l_1,l_2,\ldots$ are given by~\eqref{eqnlnDel}. 
 \end{corollary}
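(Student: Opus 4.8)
The plan is to isolate the one genuinely new ingredient in the asserted equivalence. By Corollary~\ref{cormexp}, the existence of all moments of $\rho$ is already equivalent to the existence of the uniquely determined coefficients $\dip_0,\dip_1,\ldots$, $\omega_0,\omega_1,\ldots$ and $l_1,l_2,\ldots$ given by~\eqref{eqnomegadipnDel} and~\eqref{eqnlnDel}, together with the finite expansions~\eqref{eqnmExpKCor} (with genuine Herglotz--Nevanlinna tails) for every $N$. Hence in both directions the coefficients are forced to be these determinant expressions, and the only new content is the convergence of the \emph{truncated} continued fractions
\[
 C_N(z) = \dip_0 z + \omega_0 + \cfrac{1}{-l_1 z + \cfrac{1}{\dip_1 z + \omega_1 + \cfrac{1}{\;\ddots\; + \cfrac{1}{-l_N z}}}}
\]
to $m$, together with the identification of $\lim_k \Delta_{0,k+1}/\Delta_{2,k}$ with the point mass $\rho(\{0\})$. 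The starting point of both directions is the elementary determinant identity
\[
 \frac{\Delta_{2,k}}{\Delta_{0,k+1}} = \bigl(H_k^{-1}\bigr)_{0,0} = \sum_{n=0}^{k} |P_n(0)|^2,
\]
where $H_k = [s_{i+j}]_{0\le i,j\le k}$ and $P_0,P_1,\ldots$ are the orthonormal polynomials associated with $\rho$; indeed $\Delta_{2,k}$ is exactly the $(0,0)$-cofactor of $H_k$. Thus $\lim_k \Delta_{0,k+1}/\Delta_{2,k}$ is the limit at $0$ of the Christoffel function of $\rho$.

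For the forward direction I would assume all moments exist and analyze $m-C_N$ through the Möbius representation of the head of the continued fraction. Writing the map that sends the bottom entry $w$ (with $w=\tilde{m}_N$ recovering $m$ and $w=\infty$ recovering the truncation $C_N$) as $w\mapsto (p_N(z)w+q_N(z))/(r_N(z)w+s_N(z))$, with polynomial entries of determinant $\pm1$, one obtains
\[
 m(z) - C_N(z) = \frac{\pm 1}{r_N(z)\bigl(r_N(z)\tilde{m}_N(z) + s_N(z)\bigr)},
\]
so that $C_N\to m$ pointwise on $\C\backslash\R$ precisely when the denominator polynomials $r_N$ grow without bound in the relevant sense. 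These $r_N$ are, up to normalization, closely related to the orthonormal polynomials of $\rho$, and their growth is governed at the spectral point $0$ by the Christoffel sum $\sum_n|P_n(0)|^2$. The crux is that the particular truncation ending in $-l_N z$ singles out the extremal Herglotz--Nevanlinna function whose measure carries mass $\bigl(\sum_n|P_n(0)|^2\bigr)^{-1} = \lim_k \Delta_{0,k+1}/\Delta_{2,k}$ at $0$; consequently $\lim_N C_N = m$ holds if and only if $\rho$ itself carries exactly this mass at $0$, which is the stated condition.

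For the converse direction I would begin from $m=\lim_N C_N$ and first establish that all moments of $\rho$ exist. Each $C_N$ is a rational Herglotz--Nevanlinna function, and the same Möbius-difference identity applied to $C_N$ and $C_{N+1}$ (which share the head down to $-l_N z$ and differ only by the genuine Herglotz--Nevanlinna tail $\dip_N z + \omega_N + \cfrac{1}{-l_{N+1}z}$) shows that $C_{N+1}(z)-C_N(z) = \OO(|z|^{-(2K_N+1)})$ with $K_N\to\infty$, the exponent growing with the number of $l$- and $\dip$-terms exactly as in~\eqref{eqnmKcond}. Hence the asymptotic coefficients of $C_N$ stabilize, and for each fixed order $m$ agrees with $C_N$ up to $\oo(|z|^{-(2K+1)})$; Theorem~\ref{thmmmomexp} then yields that all moments exist and coincide with those read off from the coefficients, whence the coefficient formulas follow from Corollary~\ref{cormexp}. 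The point-mass identity then follows by running the Christoffel computation of the forward direction in reverse.

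The main obstacle is the precise link between convergence of the truncated continued fractions and the point-mass identity, that is, identifying $\lim_N C_N$ as the extremal solution whose mass at $0$ equals the Christoffel limit, and controlling the alternating pole structure of $C_N$ at $z=0$. This is exactly where the forthcoming results of the next section provide the cleanest route: they realize the infinite continued fraction as the Weyl function of an explicit generalized indefinite string of Stieltjes type, in which picture the truncated $C_N$ are the Weyl functions of the truncated strings, convergence to $m$ is equivalent to the string data determining $m$, and the mass that $\rho$ places at the spectral point $0$ is given directly by the limiting ratio $\lim_k \Delta_{0,k+1}/\Delta_{2,k}$.
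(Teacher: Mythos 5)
Your overall orientation is defensible: the paper itself states this corollary without proof, saying only that it follows from Corollary~\ref{corStieltjesString} of the next section (recalling also \cite[Proposition~6.2]{IndefiniteString}), and your closing paragraph points to exactly that route. Several of your preliminary observations are also correct and useful: the reduction of the coefficient formulas to Corollary~\ref{cormexp}, the identity $\Delta_{2,k}/\Delta_{0,k+1}=(H_k^{-1})_{0,0}=\sum_{n=0}^{k}P_n(0)^2$ (so that the limit in \eqref{eqnmexprhocond} is the limit at $0$ of the Christoffel function, and in particular exists by monotonicity), and the M\"obius difference formula for $m-C_N$.

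Nevertheless, as a proof the proposal has a genuine gap at its center. The key sentence of your forward direction --- that the truncation ending in $-l_Nz$ ``singles out the extremal Herglotz--Nevanlinna function'' whose measure carries mass $\bigl(\sum_n P_n(0)^2\bigr)^{-1}$ at $0$, so that $C_N\to m$ if and only if $\rho$ carries exactly that mass --- is not an argument; it is a restatement of the corollary, and nothing in the proposal establishes it. Your converse has a second gap: from $C_{N'}-C_N=\oo\bigl(|z|^{-(2K_N+1)}\bigr)$ for each fixed pair together with pointwise convergence $C_{N'}\to m$ one cannot conclude $m-C_N=\oo\bigl(|z|^{-(2K_N+1)}\bigr)$ without uniformity in $N'$; Lemma~\ref{lemHNclose} compares two expansions with genuine Herglotz--Nevanlinna tails, and applying it to $m$ itself presupposes that $m$ admits such finite expansions, which is part of what is to be proved. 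Finally, your sketch of the string route blurs the decisive dichotomy that makes it work: for \emph{any} generalized indefinite string one has $\rho(\{0\})=L^{-1}$, whereas $\lim_k\Delta_{0,k+1}/\Delta_{2,k}=L_d^{-1}$ with $L_d=\sup_n x_n$ and $x_n=\Delta_{2,\kappa(n)}/\Delta_{0,\kappa(n)+1}$ by \eqref{eqnGISDel}, so that condition \eqref{eqnmexprhocond} says precisely $L_d=L$. The actual mechanism is then: the truncations $C_N$ are the Weyl--Titchmarsh functions of the strings of length $x_N$ obtained by restricting the coefficients (via \eqref{eqnWTrec} and Proposition~\ref{propStrFin}); these truncated strings converge to $(L_d,\omega|_{[0,L_d)},\dip|_{[0,L_d)})$, and \cite[Proposition~6.2]{IndefiniteString} upgrades this to convergence of their Weyl functions; hence $\lim_N C_N$ always exists but equals $m$ if and only if $L_d=L$, while conversely a representation \eqref{eqnmlimcontfrac} forces $m$ to be the Weyl function of a string of the form \eqref{eqnstrinfplus}, to which Corollary~\ref{corStieltjesString} applies. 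You named the right ingredients, but the two steps that carry all the content --- identifying $\lim_N C_N$ as the Weyl function of the limiting string, and translating \eqref{eqnmexprhocond} into $x_n\to L$ --- are asserted rather than executed in your write-up.
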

  
  %

  \section{Generalized indefinite strings of Stieltjes type}
      
   Let $(L,\omega,\dip)$ be a generalized indefinite string so that $L\in(0,\infty]$, $\omega$ is a real distribution\footnote{We denote with $H^1_{\loc}[0,L)$ and $H^1_{\cc}[0,L)$ the function spaces given by 
   \begin{align*}
     H^1_{\loc}[0,L) & =  \lbrace f\in AC_{\loc}[0,L) \,|\, f'\in L^2_{\loc}[0,L) \rbrace, \\
     H^1_{\cc}[0,L) & = \lbrace f\in H^1_{\loc}[0,L) \,|\, \supp(f) \text{ compact in } [0,L) \rbrace.
  \end{align*}
  The space of distributions $H^{-1}_{\loc}[0,L)$ is defined as the topological dual of $H^1_{\cc}[0,L)$. 
  A distribution $\chi$ in $H^{-1}_{\loc}[0,L)$ is said to be real if $\chi(h)$ is real for each real-valued test function $h\in H^1_{\cc}[0,L)$.} 
      in $H^{-1}_{\loc}[0,L)$ and $\dip$ is a non-negative Borel measure on $[0,L)$. 
   We consider the corresponding spectral problem of the form  
 \begin{align}\label{eqnDEho}
  -f''  = z\, \omega f + z^2 \dip f, 
 \end{align}
 where $z$ is a complex spectral parameter. 
 Of course, this differential equation has to be understood in a distributional sense (we refer to \cite{IndefiniteString} for more details).   
  
  \begin{definition}\label{defSolution}
  A solution of~\eqref{eqnDEho} is a function $f\in H^1_{\loc}[0,L)$ such that 
 \begin{align}
  d_f h(0) + \int_{0}^L f'(x) h'(x) dx = z\, \omega(fh) + z^2 \int_{[0,L)} fh\, d\dip, \quad h\in H^1_{\cc}[0,L),
 \end{align}
 for some complex constant $d_f$. 
 In this case, the constant $d_f$ is uniquely determined and will henceforth always be denoted with $f'(0-)$ for apparent reasons. 
 \end{definition}

 Associated with the generalized indefinite string $(L,\omega,\dip)$ is the corresponding Weyl--Titchmarsh function $m$ defined on $\C\backslash\R$ by 
 \begin{align}
  m(z) =  \frac{\psi'(z,0-)}{z\psi(z,0)},\quad z\in\C\backslash\R,
 \end{align} 
 where $\psi(z,\redot)$ is a non-trivial solution of the differential equation~\eqref{eqnDEho} satisfying  
 \begin{align}
   \int_0^L |\psi'(z,x)|^2 dx + \int_{[0,L)} |z\psi(z,x)|^2 d\dip(x) < \infty
 \end{align}
 and vanishing at the right endpoint when $L$ is finite. 
 The function $m$ is a Herglotz--Nevanlinna function and contains all the spectral information of the differential equation~\eqref{eqnDEho}.
 In fact, the measure $\rho$ in the integral representation~\eqref{eqnmInt} for this function is a spectral measure for an underlying self-adjoint linear relation. 
  All other quantities derived from the function $m$ will be denoted and used in the same way as they were introduced in the previous section. 
  
   It was shown in \cite{IndefiniteString} that the mapping $(L,\omega,\dip)\mapsto m$ establishes a one-to-one correspondence between generalized indefinite strings and Herglotz--Nevanlinna functions.  
  In the following, we are going to use our findings from the last section to characterize those Herglotz--Nevanlinna functions that correspond in this way to generalized indefinite strings that begin with a discrete part. 
   To be more precise, we consider generalized indefinite strings $(L,\omega,\dip)$ of the form 
   \begin{align}
     \omega|_{[0,x_{N+1})} & = \omega_0 \delta_0 + \sum_{n=1}^{N} \omega_n \delta_{x_n}, & \dip|_{[0,x_{N+1})} & = \dip_0 \delta_0 + \sum_{n=1}^{N} \dip_n \delta_{x_n},
   \end{align}
   for some integer $N\in\N_0$, increasing points $x_1,\ldots,x_{N+1}$ in $(0,L)$, real weights $\omega_{0},\ldots,\omega_{N}$ and non-negative weights $\dip_0,\ldots,\dip_N$, where $\delta_x$ denotes the unit Dirac measure centered at a point $x\in[0,L)$. 
   For such coefficients, the solutions $\psi(z,\redot)$ of the differential equation~\eqref{eqnDEho} are piecewise linear on the interval $[0,x_{N+1}]$ with kinks only at the points $x_1,\ldots,x_N$ such that
   \begin{align}
     \psi'(z,x_n-) = \psi'(z,x_n+) + (z\,\omega_n + z^2\dip_n)\psi(z,x_n), \quad z\in\C\backslash\R, 
   \end{align} 
    for every $n\in\{0,\ldots,N\}$, from which we obtain the crucial relation     
   \begin{align}\label{eqnWTrec}
       \frac{\psi'(z,x_n-)}{z\psi(z,x_n)} = \dip_n z + \omega_n + \cfrac{1}{-(x_{n+1}-x_n)z + \cfrac{1}{\cfrac{\psi'(z,x_{n+1}-)}{z\psi(z,x_{n+1})}}}\,, \quad z\in\C\backslash\R,
   \end{align}
   as long as the fractions on the right-hand side are well-defined. 
   Here we have set $x_0$ equal to zero for simplicity, so that the left-hand side of~\eqref{eqnWTrec} becomes the Weyl--Titchmarsh function $m$ when $n$ is zero. 
   In particular, these considerations show that the function $m$ admits a continued fraction expansion of the form~\eqref{eqnmratCF}, and hence is rational, when the coefficients $\omega$ and $\dip$ are supported on a finite set. 
   The converse of this statement holds true as well. 
   
   \begin{proposition}\label{propStrFin}
      If the Weyl--Titchmarsh function $m$ is rational, then the generalized indefinite string $(L,\omega,\dip)$ has the form 
              \begin{align}\label{eqnGISfin}
         \omega & = \omega_0 \delta_0 + \sum_{n=1}^{N} \omega_n \delta_{x_n}, & \dip & = \dip_0 \delta_0 + \sum_{n=1}^{N} \dip_n \delta_{x_n}, 
      \end{align}
       where the length $L$, the increasing points $x_1,\ldots,x_N$ in $(0,L)$, the real weights $\omega_0,\ldots,\omega_N$ and the non-negative weights $\dip_0,\ldots,\dip_N$ are given by 
      \begin{subequations}\label{eqnGISDel}
      \begin{align}
                \frac{1}{L} & = \frac{\Delta_{0,\kappa(N+1)+1}}{\Delta_{2,\kappa(N+1)}}, & x_n & = \frac{\Delta_{2,\kappa(n)}}{\Delta_{0,\kappa(n)+1}},  \\
        	  \omega_0 & = -\frac{\Delta_{-1,1}}{\Delta_{1,0}}, & \omega_n & = \frac{\Delta_{-1,\kappa(n)+1}}{\Delta_{1,\kappa(n)}} - \frac{\Delta_{-1,\kappa(n+1)+1}}{\Delta_{1,\kappa(n+1)}},\\
	 \dip_0 & =  -\frac{\Delta_{-2,1}}{\Delta_{0,0}}, & \dip_n & = \frac{\Delta_{-2,\kappa(n)+2}}{\Delta_{0,\kappa(n)+1}} -  \frac{\Delta_{-2,\kappa(n+1)+1}}{\Delta_{0,\kappa(n+1)}}.
      \end{align}
           \end{subequations}
   \end{proposition}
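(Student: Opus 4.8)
The plan is to use the one-to-one correspondence between generalized indefinite strings and Herglotz--Nevanlinna functions from \cite{IndefiniteString}: it suffices to exhibit \emph{one} string of the claimed form whose Weyl--Titchmarsh function is $m$ and then to appeal to uniqueness. Since $m$ is rational by assumption, Proposition~\ref{propmrat} supplies the continued fraction expansion~\eqref{eqnmratCF} with non-negative constants $\dip_0,\ldots,\dip_N$, real constants $\omega_0,\ldots,\omega_N$, positive constants $l_1,\ldots,l_N$ and a non-negative constant $r$. From these I would assemble a candidate string $(\hat{L},\hat{\omega},\hat{\dip})$ by placing mass points at $\hat{x}_0=0$ and $\hat{x}_n=l_1+\cdots+l_n$ for $n\in\{1,\ldots,N\}$, setting
\begin{align*}
  \hat{\omega} = \omega_0\delta_0 + \sum_{n=1}^N \omega_n\delta_{\hat{x}_n}, \qquad \hat{\dip} = \dip_0\delta_0 + \sum_{n=1}^N \dip_n\delta_{\hat{x}_n},
\end{align*}
and declaring the coefficients to be trivial on the remaining interval $[\hat{x}_N,\hat{L})$, where $\hat{L}=\hat{x}_N+1/r$ is to be read as $\hat{L}=\infty$ when $r=0$. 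The points $\hat{x}_n$ are increasing and contained in $(0,\hat{L})$ because the $l_n$ are positive, and the sign conditions on the weights hold by Proposition~\ref{propmrat}.

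The core of the argument is to verify that the Weyl--Titchmarsh function of $(\hat{L},\hat{\omega},\hat{\dip})$ coincides with $m$. Since the coefficients are supported on the finite set $\{\hat{x}_0,\ldots,\hat{x}_N\}$, the relevant solution $\psi(z,\redot)$ is piecewise linear and the recursion~\eqref{eqnWTrec} applies; with $\hat{x}_{n+1}-\hat{x}_n=l_{n+1}$ it reproduces the blocks $\dip_n z+\omega_n$ and the gap terms $-l_{n+1}z$ of~\eqref{eqnmratCF} one after another, starting from $n=0$ where the left-hand side of~\eqref{eqnWTrec} is precisely $m$.

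The step I expect to be the main obstacle is matching the tail $-r/z$ of~\eqref{eqnmratCF} with the behaviour of $\psi$ on the trivial final interval $[\hat{x}_N,\hat{L})$, which forces a case distinction. When $r>0$ the length $\hat{L}=\hat{x}_N+1/r$ is finite and $\psi$ vanishes at $\hat{L}$; since $\psi$ is linear on $[\hat{x}_N,\hat{L}]$, the innermost fraction in~\eqref{eqnWTrec} vanishes and the last block collapses to $\dip_N z+\omega_N-\frac{1}{(\hat{L}-\hat{x}_N)z}=\dip_N z+\omega_N-\frac{r}{z}$. When $r=0$ we have $\hat{L}=\infty$, and the square-integrability requirement in the definition of $\psi$ forces $\psi'(z,\hat{x}_N+)=0$, so that the last block reduces to $\dip_N z+\omega_N$, again matching the tail. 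In both cases the recursion yields exactly~\eqref{eqnmratCF}, so the Weyl--Titchmarsh function of $(\hat{L},\hat{\omega},\hat{\dip})$ is $m$ and uniqueness of the correspondence gives $(L,\omega,\dip)=(\hat{L},\hat{\omega},\hat{\dip})$.

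It then remains to translate the data into the Hankel determinant form~\eqref{eqnGISDel}. The formulas for $\omega_0,\omega_n,\dip_0,\dip_n$ are exactly those in~\eqref{eqnomegadipnDel}. For the points I would telescope $x_n=\sum_{k=1}^n l_k$ by rewriting the $l_k$ through the alternative formula~\eqref{eqnlnDelalt} and using the identity $\frac{\Delta_{2,\kappa(k+1)-1}}{\Delta_{0,\kappa(k+1)}}=\frac{\Delta_{2,\kappa(k)}}{\Delta_{0,\kappa(k)+1}}$, which holds trivially when $\kappa(k+1)=\kappa(k)+1$ and follows from relation~\eqref{eqnDeltaRelp} with $\Delta_{1,\kappa(k)+1}=0$ when $\kappa(k+1)=\kappa(k)+2$; the sum then collapses to $x_n=\frac{\Delta_{2,\kappa(n)}}{\Delta_{0,\kappa(n)+1}}$. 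Finally, inserting the expression for $r$ from~\eqref{eqnlnDel} into $L=x_N+1/r$ and invoking~\eqref{eqnDeltaRelp} once more turns $1/r$ into $\frac{\Delta_{2,\kappa(N+1)}}{\Delta_{0,\kappa(N+1)+1}}-\frac{\Delta_{2,\kappa(N)}}{\Delta_{0,\kappa(N)+1}}$, whence $L=\frac{\Delta_{2,\kappa(N+1)}}{\Delta_{0,\kappa(N+1)+1}}$, which is the asserted value of $1/L$.
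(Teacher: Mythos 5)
Your proposal is correct and takes essentially the same approach as the paper's proof: construct a candidate string from the continued fraction data of Proposition~\ref{propmrat}, verify via the recursion~\eqref{eqnWTrec} that its Weyl--Titchmarsh function is $m$, and conclude from the one-to-one correspondence of \cite{IndefiniteString} that it coincides with $(L,\omega,\dip)$. The only difference is that you spell out what the paper declares to follow readily from Proposition~\ref{propmrat} and Remark~\ref{remCFcoefRexp} --- the case distinction on $r$ for matching the tail and the telescoping of $\sum_k l_k$ via~\eqref{eqnDeltaRelp} --- and both of those computations are right.
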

   
   \begin{proof}
     Suppose that the Weyl--Titchmarsh function $m$ is rational. 
     According to Proposition~\ref{propmrat}, it admits the continued fraction expansion~\eqref{eqnmratCF}. 
     With the constants from this expansion, we define a generalized indefinite string $(\tilde{L},\tilde{\omega},\tilde{\dip})$ such that  
     \begin{align*}
          \tilde{\omega} & = \omega_0 \delta_0 + \sum_{n=1}^{N} \omega_n \delta_{x_n}, & \tilde{\dip} & = \dip_0 \delta_0 + \sum_{n=1}^{N} \dip_n \delta_{x_n}, & \tilde{L} & = \begin{cases} \infty, & r=0, \\ x_N+\frac{1}{r}, & r>0, \end{cases}
      \end{align*}
     where the increasing points $x_1,\ldots,x_N$ are given by  
      \begin{align}\label{eqnDefxn}
       x_n & = \sum_{j=1}^n l_j. 
     \end{align} 
     Since the Weyl--Titchmarsh function $\tilde{m}$ corresponding to $(\tilde{L},\tilde{\omega},\tilde{\dip})$ admits the same continued fraction expansion as $m$ in view of~\eqref{eqnWTrec}, we conclude that it coincides with $m$ and hence so do the generalized indefinite strings. 
     This shows that $(L,\omega,\dip)$ has the claimed form and the explicit formulas follow readily from the ones in Proposition~\ref{propmrat} and Remark~\ref{remCFcoefRexp}.
   \end{proof}
   
  Let us mention here that the expressions for the weights in Proposition~\ref{propStrFin} can also be brought in alternative forms by employing the relations~\eqref{eqnDeltaRel} and~\eqref{eqnDeltaRelm} as in Remark~\ref{remCFcoefRexp}. 
  
 \begin{remark}
  For generalized indefinite strings as in Proposition~\ref{propStrFin}, one has the identities 
    \begin{align}
      \sum_{j=0}^{n-1} \omega_j & = -\frac{\Delta_{-1,\kappa(n)+1}}{\Delta_{1,\kappa(n)}},  \\ 
     \int_0^{x_n} \Wr(x)^2 dx + \sum_{j=0}^{n-1} \dip_j & = -\frac{\Delta_{-2,\kappa(n)+2}}{\Delta_{0,\kappa(n)+1}}, 
    \end{align} 
    where $\Wr\in L^2_{\loc}[0,L)$ is the normalized anti-derivative of the distribution $\omega$ so that 
     \begin{align}
     \omega(h) = - \int_0^L \Wr(x)h'(x)dx, \quad h\in H^1_{\cc}[0,L).
   \end{align} 
   \end{remark}
      
   We now proceed to add another item to the equivalences in Theorem~\ref{thmmmomexp} and Theorem~\ref{thmmKexp}, this time in terms of the corresponding generalized indefinite string. 
   
   \begin{theorem}\label{thmStrK}
    Suppose that the Weyl--Titchmarsh function $m$ is not rational and let $K\in\N_0$.
    Then the moments of the spectral measure $\rho$ exist up to order $2K$ if and only if there is an integer $N\in\N$, increasing points $x_1,\ldots,x_{N}$ in $(0,L)$, real weights $\omega_0,\ldots,\omega_{N-1}$ and non-negative weights $\dip_0,\ldots,\dip_{N}$ with $\dip_n+|\omega_n|>0$ for all $n\in\{1,\ldots,N-1\}$ and 
    \begin{align}
       N - 1 + \#\{n\in\{1,\ldots,N\}\,|\,\dip_n\not=0\} \geq K
    \end{align} 
    such that the generalized indefinite string $(L,\omega,\dip)$ has the form  
    \begin{align}\label{eqnstrK}
      \omega|_{[0,x_N)} & = \omega_0 \delta_0 + \sum_{n=1}^{N-1} \omega_n \delta_{x_n}, & \dip|_{[0,x_N]} & = \dip_0 \delta_0 + \sum_{n=1}^{N} \dip_n \delta_{x_n}.
    \end{align}
   \end{theorem}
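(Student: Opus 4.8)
The plan is to transport the continued fraction characterization of Theorem~\ref{thmmKexp} to the level of the string via the recursion~\eqref{eqnWTrec} together with the one-to-one correspondence between generalized indefinite strings and Herglotz--Nevanlinna functions established in~\cite{IndefiniteString}. In other words, this is the non-rational analogue of the passage from Proposition~\ref{propmrat} to Proposition~\ref{propStrFin}, and I would let the argument proceed in the same spirit, with the rational building block now replaced by the expansion carrying a Herglotz--Nevanlinna remainder.

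For the direction from the string to the moments, I would first suppose that $(L,\omega,\dip)$ has the form~\eqref{eqnstrK}. Writing $x_0 = 0$ and $l_n = x_n - x_{n-1} > 0$ and iterating the relation~\eqref{eqnWTrec} from $n = 0$ to $n = N-1$ expresses the Weyl--Titchmarsh function $m$ as the continued fraction~\eqref{eqnmExpK}, whose innermost term $\psi'(z,x_N-)/(z\psi(z,x_N))$ decomposes as $\dip_N z + \tilde m(z)$ for the Herglotz--Nevanlinna function $\tilde m$ coming from the part of the string on $[x_N,L)$. Since the constants arising this way satisfy $l_n > 0$, satisfy $\dip_n + |\omega_n| > 0$ for $n \in \{1,\ldots,N-1\}$ by hypothesis, and obey the lower bound~\eqref{eqnmKcond} on $N - 1 + \#\{n : \dip_n \neq 0\}$, the sufficiency part of Theorem~\ref{thmmKexp} guarantees that the moments of $\rho$ exist up to order $2K$.

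For the converse I would argue exactly as in the proof of Proposition~\ref{propStrFin}. If the moments of $\rho$ exist up to order $2K$, then Theorem~\ref{thmmKexp} furnishes an expansion~\eqref{eqnmExpK} with the stated positivity and sign conditions and with~\eqref{eqnmKcond}. I would let $(\hat L,\hat\omega,\hat\dip)$ be the generalized indefinite string whose Weyl--Titchmarsh function is the remainder $\tilde m$, put $x_n = \sum_{j=1}^n l_j$, and define a string $(\tilde L,\tilde\omega,\tilde\dip)$ by prescribing the discrete coefficients $\omega_0\delta_0 + \sum_{n=1}^{N-1}\omega_n\delta_{x_n}$ and $\dip_0\delta_0 + \sum_{n=1}^{N}\dip_n\delta_{x_n}$ on $[0,x_N]$ and gluing $(\hat L,\hat\omega,\hat\dip)$ on at the point $x_N$. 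By~\eqref{eqnWTrec} the Weyl--Titchmarsh function of $(\tilde L,\tilde\omega,\tilde\dip)$ is again the continued fraction~\eqref{eqnmExpK} and therefore coincides with $m$; the uniqueness in the correspondence of~\cite{IndefiniteString} then forces $(L,\omega,\dip) = (\tilde L,\tilde\omega,\tilde\dip)$, which is of the desired form~\eqref{eqnstrK}.

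The step that I expect to require the most care is the interface at $x_N$, where the innermost term $\dip_N z + \tilde m(z)$ of the expansion has to be matched with the jump relation for $\psi'$ across $x_N$. One must check that the explicitly prescribed mass $\dip_N\delta_{x_N}$, together with the attached string for $\tilde m$, reproduces $\psi'(z,x_N-)/(z\psi(z,x_N))$ exactly; in particular, the string associated with $\tilde m$ must not carry a further $\dip$ point mass at its left endpoint, so that $\dip_N$ is genuinely the total $\dip$-mass at $x_N$. This bookkeeping is made unambiguous by the normalization of $\tilde m$ recorded in Corollary~\ref{cormKexpForm}, which pins down $\lim_{\eta\to\infty}\tilde m(\I\eta)/(\I\eta)$. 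Once this matching is in place, the remaining verifications — that $l_n > 0$, that the count transfers verbatim from~\eqref{eqnmKcond}, and that the explicit Hankel-determinant formulas carry over — are routine.
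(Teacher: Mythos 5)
Your proposal is correct and takes essentially the same route as the paper: both directions rest on the recursion~\eqref{eqnWTrec}, on Theorem~\ref{thmmKexp}, and on the one-to-one correspondence between generalized indefinite strings and Herglotz--Nevanlinna functions from \cite{IndefiniteString}, with your explicit gluing of the string realizing $\tilde{m}$ at $x_N$ followed by uniqueness being exactly the argument the paper compresses into its appeal to \cite[Theorem~6.1]{IndefiniteString} over all extensions of the prescribed discrete part. Your concern about the interface at $x_N$ is also harmless: any $\dip$-mass of the glued-on string at its left endpoint can simply be absorbed into $\dip_N$ (this only increases the count in the inequality), or avoided altogether by normalizing $\tilde{m}$ so that $\lim_{\eta\rightarrow\infty}\tilde{m}(\I\eta)/\I\eta=0$.
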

  
   \begin{proof}
     Assume first that the moments of the measure $\rho$ exist up to order $2K$ so that the function $m$ admits the continued fraction expansion~\eqref{eqnmExpK} by Theorem~\ref{thmmKexp}.
      With the constants from this expansion, we consider all generalized indefinite strings $(\tilde{L},\tilde{\omega},\tilde{\dip})$ that satisfy 
     \begin{align*}
      \tilde{\omega}|_{[0,x_N)} & = \omega_0 \delta_0 + \sum_{n=1}^{N-1} \omega_n \delta_{x_n}, & \tilde{\dip}|_{[0,x_N)} & = \dip_0 \delta_0 + \sum_{n=1}^{N-1} \dip_n \delta_{x_n},
    \end{align*}     
    where the points $x_1,\ldots,x_N$ are given by~\eqref{eqnDefxn} and it is supposed that $\tilde{L}>x_N$. 
     According to~\eqref{eqnWTrec}, the Weyl--Titchmarsh functions corresponding to these generalized indefinite strings admit continued fraction expansions of the form~\eqref{eqnmExpK}. 
     It then follows from \cite[Theorem~6.1]{IndefiniteString} that there is one such generalized indefinite string such that the continued fraction expansion coincides precisely with the initial one for $m$.
     We conclude that this generalized indefinite string is necessarily the same as $(L,\omega,\dip)$, which guarantees that $(L,\omega,\dip)$ has the claimed form. 
          
     For the converse direction, it remains to notice that the recursion in~\eqref{eqnWTrec} yields the continued fraction expansion~\eqref{eqnmExpK} with the constants $l_1,\ldots,l_N$ given by 
     \begin{align*}
       l_1 & = x_1, & l_n & = x_n - x_{n-1}, \quad n>1,
     \end{align*}
     and the Herglotz--Nevanlinna function $\tilde{m}$ (see \cite[Lemma~7.1]{IndefiniteString}) given by
     \begin{align*}
       \tilde{m}(z) = \frac{\psi'(z,x_N-)}{z\psi(z,x_N)} - \dip_N z, \quad z\in\C\backslash\R,
     \end{align*}
     so that Theorem~\ref{thmmKexp} guarantees the existence of all moments up to order $2K$. 
   \end{proof}
  
   It is again possible to find explicit expressions for the weights and their positions in Theorem~\ref{thmStrK}. 
   These follow readily from the corresponding formulas in Corollary~\ref{cormKexpForm}.
   
   \begin{corollary}\label{corStrKForm}
     If the Weyl--Titchmarsh function $m$ is not rational and such that the moments of the spectral measure $\rho$ exist up to order $2K$ for some $K\in\N_0$, then the generalized indefinite string $(L,\omega,\dip)$ has the form 
         \begin{align}
      \omega|_{[0,x_N)} & = \omega_0 \delta_0 + \sum_{n=1}^{N-1} \omega_n \delta_{x_n}, & \dip|_{[0,x_N)} & = \dip_0 \delta_0 + \sum_{n=1}^{N-1} \dip_n \delta_{x_n},
    \end{align} 
   where the increasing points $x_1,\ldots,x_{N}$ in $(0,L)$, the real weights $\omega_0,\ldots,\omega_{N-1}$ and the non-negative weights $\dip_0,\ldots,\dip_{N-1}$ are given by~\eqref{eqnGISDel}. 
    Furthermore, if the determinant $\Delta_{1,K}$ is zero, then $\kappa(N)=K-1$ and one has  
    \begin{align}
      \dip(\{x_N\}) = \frac{\Delta_{-1,\kappa(N)+2}^2}{\Delta_{0,\kappa(N)+1}\Delta_{0,\kappa(N)+2}}>0. 
    \end{align}
   \end{corollary}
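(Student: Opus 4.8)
The plan is to combine the structural statement of Theorem~\ref{thmStrK}, which already identifies $(L,\omega,\dip)$ as discrete on $[0,x_N)$, with the explicit Hankel-determinant formulas for the continued fraction coefficients in Corollary~\ref{cormKexpForm}, reading one off from the other through the recursion~\eqref{eqnWTrec}. In this way the whole task reduces to matching the positions and weights of the string with the constants $l_n$, $\omega_n$, $\dip_n$ of the expansion~\eqref{eqnmExpKCor}.

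Concretely, since $m$ is non-rational with moments up to order $2K$, Corollary~\ref{cormKexpForm} writes $m$ as the continued fraction~\eqref{eqnmExpKCor} with coefficients given by~\eqref{eqnlnDel} and~\eqref{eqnomegadipnDel}. Comparing~\eqref{eqnmExpKCor} with~\eqref{eqnWTrec} exactly as in the converse direction of the proof of Theorem~\ref{thmStrK}, one identifies $l_1=x_1$ and $l_n=x_n-x_{n-1}$ for $n>1$, while the string weights $\omega_n$ and $\dip_n$ coincide with the corresponding continued fraction constants. The weight formulas in~\eqref{eqnGISDel} are then immediate from~\eqref{eqnomegadipnDel}. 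For the positions I would use $x_n=\sum_{j=1}^n l_j$ as in~\eqref{eqnDefxn}, substitute the alternative expression~\eqref{eqnlnDelalt} for $l_j$ (which rests on Sylvester's identity~\eqref{eqnDeltaRelp}), and observe that the resulting sum telescopes to $x_n=\Delta_{2,\kappa(n)}/\Delta_{0,\kappa(n)+1}$, precisely as in the rational situation of Proposition~\ref{propStrFin}.

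It remains to treat the last assertion, so assume $\Delta_{1,K}=0$. The final part of Corollary~\ref{cormKexpForm} then gives $\kappa(N)=K-1$ and the growth relation~\eqref{eqnmtillin} for the tail function $\tilde{m}$ in~\eqref{eqnmExpKCor}. The point is that $\lim_{\eta\rightarrow\infty}\tilde{m}(\I\eta)/(\I\eta)$ equals the Dirac mass $\dip(\{x_N\})$: in the correspondence of Theorem~\ref{thmStrK} the tail equals $\psi'(z,x_N-)/(z\psi(z,x_N))$, whose linear-in-$z$ part is carried entirely by the point mass at $x_N$, so that substituting~\eqref{eqnmtillin} yields the asserted value of $\dip(\{x_N\})$.

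The step I expect to require the most care is this last identification. One must check that the $\dip_N z$ term appearing in the expansion of Theorem~\ref{thmmKexp} — which encodes the mass at $x_N$ — is exactly the linear coefficient that gets absorbed into $\tilde{m}$ when passing to the normalized form~\eqref{eqnmExpKCor} of Corollary~\ref{cormKexpForm}, so that neither the finite part nor the genuinely sublinear Herglotz--Nevanlinna remainder contributes to $\dip(\{x_N\})$. By comparison, the telescoping that produces the positions is a routine calculation once~\eqref{eqnlnDelalt} is available.
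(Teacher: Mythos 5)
Your proposal is correct and follows essentially the same route as the paper: the first part is read off from Corollary~\ref{cormKexpForm} together with the proof of Theorem~\ref{thmStrK} (the positions coming from the same telescoping via~\eqref{eqnlnDelalt} that underlies Proposition~\ref{propStrFin}), and the final claim from~\eqref{eqnmtillin}. The delicate identification you flag---that the linear growth of $\psi'(z,x_N-)/(z\psi(z,x_N))$ is carried entirely by the point mass $\dip(\{x_N\})$, the remainder being a sublinear Herglotz--Nevanlinna function---is precisely what the paper settles by invoking \cite[Lemma~7.1]{IndefiniteString}.
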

   
   \begin{proof}
    In view of Corollary~\ref{cormKexpForm} and the proof of Theorem~\ref{thmStrK}, it only remains to justify the last claim, which follows from~\eqref{eqnmtillin} upon taking~\cite[Lemma~7.1]{IndefiniteString} into account. 
   \end{proof}
   
   One can infer from Lemma~\ref{lemHNclose} and Theorem~\ref{thmmmomexp} that the moments of the spectral measure only depend on the discrete part of the generalized indefinite string near the left endpoint. 
   In turn, this discrete part is determined by the moments, as we have seen in Corollary~\ref{corStrKForm}.
   
   \begin{remark}
     Theorem~\ref{thmmmomexp} and Corollary~\ref{corStrKForm} show that the asymptotic behavior  
       \begin{align}
         -m(z) = s_{-2}z + s_{-1} + \sum_{k=0}^{2K} \frac{s_k}{z^{k+1}} + \oo\biggl(\frac{1}{|z|^{2K+1}}\biggr)
       \end{align}
       as $|z|\rightarrow\infty$ along the imaginary axis, of the Weyl--Titchmarsh function $m$ uniquely determines the generalized indefinite string $(L,\omega,\dip)$ near the left endpoint. 
       This can be viewed as a variant of local inverse uniqueness results as in \cite{be01, gekima02, gesi00, lawo11, we04} or an instance of the principle that the asymptotics of the Weyl--Titchmarsh function are related to the behavior of the coefficients near the left endpoint \cite{be89, AsymCS, ka73, ka75, wiwo08}. 
   \end{remark}

  We continue with the characterization of those generalized indefinite strings that give rise to spectral measures with finite moments of arbitrary order. 

  \begin{corollary}\label{corStrinf}
        Suppose that the Weyl--Titchmarsh function $m$ is not rational.
    Then all moments of the spectral measure $\rho$ exist if and only if there are increasing points $x_1,x_2,\ldots$ in $(0,L)$, real weights $\omega_0,\omega_1,\ldots$ and non-negative weights $\dip_0,\dip_1,\ldots$ with $\dip_n+|\omega_n|>0$ for all $n\in\N$ such that the generalized indefinite string $(L,\omega,\dip)$ has the form 
    \begin{align}\label{eqnstrinf}
      \omega|_{[0,L_d)} & = \omega_0 \delta_0 + \sum_{n=1}^{\infty} \omega_n \delta_{x_n}, & \dip|_{[0,L_d)} & = \dip_0 \delta_0 + \sum_{n=1}^{\infty} \dip_n \delta_{x_n},  & L_d & = \sup_{n\in\N} x_n. 
    \end{align}
       In this case, the increasing points $x_1,x_2,\ldots$ in $(0,L)$, the real weights $\omega_0,\omega_1,\ldots$ and the non-negative weights $\dip_0,\dip_1,\ldots$ are given by~\eqref{eqnGISDel}.
  \end{corollary}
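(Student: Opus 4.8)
The plan is to obtain this corollary as the $K\rightarrow\infty$ limit of Theorem~\ref{thmStrK} together with Corollary~\ref{corStrKForm}, in precisely the same manner in which Corollary~\ref{cormexp} was deduced from Theorem~\ref{thmmKexp} and Corollary~\ref{cormKexpForm}. The crucial observation that makes such a passage to the limit possible is that the positions $x_n$ and the weights $\omega_n$, $\dip_n$ produced by Corollary~\ref{corStrKForm} are given by the explicit formulas~\eqref{eqnGISDel}, which depend only on the globally defined function $\kappa\colon\N\rightarrow\N_0$ and not on the particular value of $K$.

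For the forward direction, I would assume that all moments of the measure $\rho$ exist. Then, for every $K\in\N_0$, the moments exist up to order $2K$, so Corollary~\ref{corStrKForm} guarantees that the generalized indefinite string $(L,\omega,\dip)$ begins with a discrete part supported on finitely many points $x_1,\ldots,x_{N}$ with positions and weights given by~\eqref{eqnGISDel}. Since $m$ is non-rational, the sequence $\Delta_{1,0},\Delta_{1,1},\ldots$ contains infinitely many non-zero elements, so the number $N=N(K)$ of these points tends to infinity as $K\rightarrow\infty$. Because the formulas~\eqref{eqnGISDel} are independent of $K$, the discrete parts obtained for different values of $K$ are mutually consistent and thus determine a single infinite sequence of points and weights. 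Passing to the union yields that $(L,\omega,\dip)$ has the claimed form~\eqref{eqnstrinf} on $[0,L_d)$ with $L_d=\sup_{n}x_n$, and the condition $\dip_n+|\omega_n|>0$ for all $n\in\N$ follows from Remark~\ref{remCFcoefRexp}.

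For the converse direction, I would assume that $(L,\omega,\dip)$ has the infinite discrete form~\eqref{eqnstrinf} and fix an arbitrary $K\in\N_0$. Choosing $N$ large enough that
\begin{align*}
  N - 1 + \#\{n\in\{1,\ldots,N\}\,|\,\dip_n\not=0\} \geq K,
\end{align*}
which is possible because the first summand already tends to infinity, the truncation of the string to its first $N$ points meets the hypotheses of the converse part of Theorem~\ref{thmStrK}. That result then shows, via the recursion~\eqref{eqnWTrec}, that the moments of $\rho$ exist up to order $2K$. As $K$ was arbitrary, all moments of $\rho$ exist.

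The main technical point to handle carefully is the consistency of the finite discrete parts across different values of $K$ in the forward direction: one must verify that enlarging $K$ only extends the already determined string further to the right without altering the previously fixed points and weights. This is exactly what the $K$-independence of~\eqref{eqnGISDel} provides, together with the uniqueness in the correspondence between generalized indefinite strings and Herglotz--Nevanlinna functions established in \cite{IndefiniteString}.
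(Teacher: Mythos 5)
Your proposal is correct and takes essentially the same route as the paper, whose entire proof is the one-line remark that the claim ``follows immediately from Theorem~\ref{thmStrK} and Corollary~\ref{corStrKForm}''; your argument is a faithful elaboration of exactly that deduction, including the key point that the formulas~\eqref{eqnGISDel} are independent of $K$, which is what makes the passage to the limit $K\rightarrow\infty$ consistent.
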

  
  \begin{proof}
   The claim follows immediately from Theorem~\ref{thmStrK} and Corollary~\ref{corStrKForm}. 
  \end{proof}

  In order to make sure that the Weyl--Titchmarsh function $m$ corresponds to a generalized indefinite string $(L,\omega,\dip)$ whose coefficients are supported on discrete sets, we just need to introduce one additional condition on the spectral data. 

   \begin{corollary}\label{corStieltjesString}
     Suppose that the Weyl--Titchmarsh function $m$ is not rational.
     Then all moments of the spectral measure $\rho$ exist and  
     \begin{align}\label{eqnCondStrL}
       \rho(\{0\}) = \lim_{k\rightarrow\infty} \frac{\Delta_{0,k+1}}{\Delta_{2,k}}
     \end{align} 
     if and only if there are increasing points $x_1,x_2,\ldots$ in $(0,L)$ with $x_n\rightarrow L$, real weights $\omega_0,\omega_1,\ldots$ and non-negative weights $\dip_0,\dip_1,\ldots$ with $\dip_n+|\omega_n|>0$ for all $n\in\N$ such that  the generalized indefinite string $(L,\omega,\dip)$ has the form 
    \begin{align}\label{eqnstrinfplus}
      \omega & = \omega_0 \delta_0 + \sum_{n=1}^{\infty} \omega_n \delta_{x_n}, & \dip & = \dip_0 \delta_0 + \sum_{n=1}^{\infty} \dip_n \delta_{x_n}. 
    \end{align}
    In this case, the increasing points $x_1,x_2,\ldots$ in $(0,L)$, the real weights $\omega_0,\omega_1,\ldots$ and the non-negative weights $\dip_0,\dip_1,\ldots$ are given by~\eqref{eqnGISDel}. 
   \end{corollary}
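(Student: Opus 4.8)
The plan is to prove Corollary~\ref{corStieltjesString} by combining Corollary~\ref{corStrinf} (which already characterizes when all moments exist in terms of a generalized indefinite string with purely discrete coefficients on $[0,L_d)$) with the additional analysis of the single extra condition~\eqref{eqnCondStrL}. By Corollary~\ref{corStrinf}, all moments of $\rho$ exist if and only if $(L,\omega,\dip)$ takes the form~\eqref{eqnstrinf} on $[0,L_d)$, where $L_d=\sup_{n} x_n$ and $x_n=\Delta_{2,\kappa(n)}/\Delta_{0,\kappa(n)+1}$ by the formulas in~\eqref{eqnGISDel}. The remaining task is therefore to show that, \emph{under the assumption that all moments exist}, the condition~\eqref{eqnCondStrL} is equivalent to the assertion that the discrete part exhausts the entire string, that is, $L_d=L$, equivalently that $x_n\to L$ and that there is no further contribution to $\omega$ or $\dip$ beyond $L_d$.

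First I would identify what the limit $\lim_{k\to\infty}\Delta_{0,k+1}/\Delta_{2,k}$ means geometrically. Since the increasing points satisfy $x_n=\Delta_{2,\kappa(n)}/\Delta_{0,\kappa(n)+1}$, the reciprocal of this quantity along the subsequence $k=\kappa(n)$ is exactly $x_n$, so the limit in~\eqref{eqnCondStrL} should be interpreted as $L_d=\sup_n x_n$ once one checks that the full sequence $\Delta_{0,k+1}/\Delta_{2,k}$ and its subsequence over $k=\kappa(n)$ share the same limit. This monotonicity is the analogue, in the present moment setting, of the relation~\eqref{eqnlnDelalt} from Remark~\ref{remCFcoefRexp}, which expresses the increments $l_n=x_n-x_{n-1}$ through ratios of $\Delta_{2}$ and $\Delta_{0}$ determinants; summing these increments telescopes to $x_n$ and shows that $\lim_k \Delta_{0,k+1}/\Delta_{2,k}=L_d$. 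The content of the corollary is then that $L_d=L$ precisely when $\rho(\{0\})$ equals this limit.

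The key analytic step is to relate $\rho(\{0\})$ to the behavior of the string beyond $L_d$. Here I would invoke the one-to-one correspondence between generalized indefinite strings and Herglotz--Nevanlinna functions from \cite{IndefiniteString}, together with the structure of the Weyl--Titchmarsh function: the point mass $\rho(\{0\})$ is governed by the low-frequency behavior of $m$, which in turn reflects the total length $L$ (the coefficient $z^{-1}$ behavior of $m(z)=\psi'(z,0-)/(z\psi(z,0))$ encodes $\lim_{z\to 0}$ data determined by the remaining part of the string on $[L_d,L)$). The equality~\eqref{eqnCondStrL} forces the string to have no nondiscrete continuation, so that $L=L_d$ and the sums in~\eqref{eqnstrinfplus} run over all of $[0,L)$ with $x_n\to L$; conversely, if the string is of the purely discrete form~\eqref{eqnstrinfplus} with $x_n\to L$, then $\rho(\{0\})$ is given exactly by the limit $L_d=\lim_k \Delta_{0,k+1}/\Delta_{2,k}$.

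The main obstacle I expect is the analytic identification of $\rho(\{0\})$ with the limit of the determinant ratios, since this is a statement about the accumulation of the discrete points versus a spectral quantity at $z=0$, and it is exactly this link that the excerpt flags as using "one of the forthcoming results from the next section" in the parallel statement Corollary~\ref{cormexplim}. The cleanest route is to transport Corollary~\ref{cormexplim} through the string correspondence: \eqref{eqnmlimcontfrac} expresses $m$ as the limit of the finite continued fractions terminating in $-l_N z$, which corresponds to a string whose discrete part on $[0,L_d)$ is followed by nothing, i.e.\ $L=L_d$; matching this against Corollary~\ref{corStrinf} and reading off that the condition $\rho(\{0\})=\lim_k\Delta_{0,k+1}/\Delta_{2,k}$ is precisely what singles out the terminating continued fraction then yields the claim. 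I would thus structure the proof as a short reduction: apply Corollary~\ref{corStrinf} to get the discrete form on $[0,L_d)$, then apply Corollary~\ref{cormexplim} via the string correspondence of \cite{IndefiniteString} to conclude that \eqref{eqnCondStrL} holds if and only if $L=L_d$ with $x_n\to L$, giving the form~\eqref{eqnstrinfplus} with coefficients as in~\eqref{eqnGISDel}.
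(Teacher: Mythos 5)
Your reduction via Corollary~\ref{corStrinf} is the paper's first step, and your identification of the limit of the determinant ratios with the discrete length is sound modulo a reciprocal slip: by \eqref{eqnDeltaRelp} the sequence $\Delta_{2,k}/\Delta_{0,k+1}$ is non-decreasing in $k$ and equals $x_n$ along $k=\kappa(n)$, so $\lim_{k\to\infty}\Delta_{0,k+1}/\Delta_{2,k}=1/L_d$, \emph{not} $L_d$. The genuine gap is the step that closes the argument. The paper's entire proof beyond Corollary~\ref{corStrinf} is the identity $\rho(\{0\})=L^{-1}$, a known fact for generalized indefinite strings from \cite{IndefiniteString}; with it, condition \eqref{eqnCondStrL} reads $1/L=1/L_d$, i.e.\ $x_n\to L$, which is exactly what separates \eqref{eqnstrinfplus} from \eqref{eqnstrinf}. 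You never state this identity. Instead you gesture at the ``low-frequency behavior'' of $m$ and then simply assert that \eqref{eqnCondStrL} ``forces the string to have no nondiscrete continuation''---but that assertion is precisely what has to be proved, and with your non-reciprocal reading of the limit it would amount to $\rho(\{0\})=L_d$, which is not even the correct form of the relation.

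Your proposed ``cleanest route'' does not repair this, because it is circular within the paper's logical architecture: Corollary~\ref{cormexplim} is stated \emph{without proof}, and the paper says explicitly (both before Corollary~\ref{cormexplim} and after Corollary~\ref{corStieltjesString}) that it is intended to be derived \emph{from} Corollary~\ref{corStieltjesString} together with \cite[Proposition~6.2]{IndefiniteString}. Using Corollary~\ref{cormexplim} as a black box to prove Corollary~\ref{corStieltjesString} therefore rests on an unproven statement whose intended proof uses the very result you are establishing. Even granting it, transporting \eqref{eqnmlimcontfrac} through the string correspondence would still require the continuity statement \cite[Proposition~6.2]{IndefiniteString} and an argument identifying the limit of the truncated (finite, fully discrete) strings with $(L,\omega,\dip)$, which you do not carry out. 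The direct fix is short: cite or prove $\rho(\{0\})=L^{-1}$ and combine it with Corollary~\ref{corStrinf} and the monotone limit $\lim_{k\to\infty}\Delta_{0,k+1}/\Delta_{2,k}=1/L_d$.
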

   
   \begin{proof}
     This is a consequence of Corollary~\ref{corStrinf} and the relation $\rho(\{0\})=L^{-1}$. 
   \end{proof}
      
    As already indicated before, this last result can be used to prove Corollary~\ref{cormexplim} from the previous section, where one should also recall \cite[Proposition~6.2]{IndefiniteString}. 
      
  \begin{remark}
   In conjunction with the solution of the inverse spectral problem for generalized indefinite strings in~\cite{IndefiniteString}, the characterization in Corollary~\ref{corStieltjesString}  gives rise to a solution of the inverse spectral problem for generalized indefinite strings whose coefficients are supported on discrete sets. 
   Since one has explicit formulas for the solution, this also yields a solution of the inverse spectral problem for the class of generalized indefinite strings for which the distribution $\omega$ is supported on a discrete set and the measure $\dip$ vanishes identically. 
   More precisely, these indefinite strings are determined by the additional conditions that 
   \begin{align}
       \lim_{\eta\rightarrow\infty} \frac{m(\I\eta)}{\I\eta} = 0 
   \end{align}
   and that none of the Hankel determinants $\Delta_{1,1},\Delta_{1,2},\ldots$ is zero. 
   \end{remark}

\end{document}